\pdfoutput=1
\documentclass[preprint,svgnames,dvipsnames]{elsarticle}

\usepackage[OT1]{fontenc}
\usepackage{amsmath,amsfonts, amssymb,amsthm}
\usepackage[numbers]{natbib}
\usepackage[colorlinks,citecolor=blue,urlcolor=blue]{hyperref}
\usepackage[left=3cm,right=3cm,top=3cm,bottom=3cm]{geometry}
\usepackage{algorithm}
\usepackage{algorithmicx}
\usepackage{algpseudocode}
\usepackage{bm}
\usepackage{graphicx}
\usepackage{dsfont}
\usepackage{tabularx}
\usepackage{stmaryrd}
\usepackage[svgnames,dvipsnames]{xcolor}
\usepackage{listings}
\usepackage{amssymb}
\usepackage{booktabs}

\newtheorem{lme}{Lemma}
\newtheorem{thm}{Theorem}
\newtheorem{dfi}{Definition}
\newtheorem{example}{Example}

\newtheorem{propo}{Proposition}

\newtheorem{rmk}{Remark}


\newcommand{\ie}{\textnormal{i.e.,}~}
\newcommand{\eg}{\textnormal{e.g.,}~}

\newcommand{\vs}{\textnormal{vs.}~}

\newcommand{\wrt}{\textnormal{w.r.t.}~}

\newcommand{\iid}{\textnormal{i.i.d.}~}
\newcommand{\myeqref}[1]{Eq.~(\ref{#1})}
\newcommand{\myfigref}[1]{Figure~\ref{#1}}

\newcommand{\mytabref}[1]{Table~\ref{#1}}

\newcommand{\E}[1]{\mathbb{E}\left[ #1 \right]}
\newcommand{\V}[1]{\mathbb{V}\left( #1 \right)}

\newcommand{\Sclos}{S^{\textnormal{clos}}}
\newcommand{\St}{S^T}
\newcommand{\Pmatrix}[1]{\begin{pmatrix} #1 \end{pmatrix}}
\newcommand{\PVext}{\textnormal{PV}^{\,\textnormal{0}}}
\newcommand{\PME}{\textnormal{PME}}
\newcommand{\PV}{\textnormal{PV}}
\newcommand{\Sh}{\textnormal{Sh}}

\title{Proportional marginal effects for global sensitivity analysis}


\author[a]{Margot Herin}
\author[b,c,d]{Marouane Il Idrissi}
\author[b,c]{Vincent Chabridon}
\author[b,c,d,e]{Bertrand Iooss}

\address[a]{Sorbonne Universit\'e, Laboratoire d'Informatique de Paris 6, 4 place Jussieu, 75005 Paris, France.}
\address[b]{EDF Lab Chatou, 6 Quai Watier, 78401 Chatou, France}
\address[c]{SINCLAIR AI Lab., Saclay, France}
\address[d]{Institut de Mathématiques de Toulouse, 31062 Toulouse, France}
\address[e]{Corresponding Author - Email: bertrand.iooss@edf.fr}

\begin{document}

\begin{frontmatter}

\begin{abstract}
Performing (variance-based) global sensitivity analysis (GSA) with dependent inputs has recently benefited from cooperative game theory concepts.
By using this theory, despite the potential correlation between the inputs, meaningful sensitivity indices can be defined via allocation shares of the model output's variance to each input. 
The ``Shapley effects'', i.e., the Shapley values transposed to variance-based GSA problems, allowed for this suitable solution. However, these indices exhibit a particular behavior that can be undesirable: an exogenous input (i.e., which is not explicitly included in the structural equations of the model) can be associated with a strictly positive index when it is correlated to endogenous inputs. In the present work, the use of a different allocation, called the ``proportional values'' is investigated. A first contribution is to propose an extension of this allocation, suitable for variance-based GSA. Novel GSA indices are then proposed, called the ``proportional marginal effects'' (PME). The notion of exogeneity is formally defined in the context of variance-based GSA, and it is shown that the PME allow the distinction of exogenous variables, even when they are correlated to endogenous inputs. Moreover, their behavior is compared to the Shapley effects on analytical toy-cases and more realistic use-cases.
\end{abstract}

\begin{keyword}
Cooperative game theory \sep Dependence \sep Proportional values \sep Sobol' indices \sep Shapley effects.
\end{keyword}

\end{frontmatter}

\section{Introduction}\label{sec:intro}

When using phenomenological numerical models in science and engineering, the uncertainty quantification (UQ) process allows to consider and better quantify the various sources of uncertainties, most often by the way of probabilistic modeling \cite{ghahig17}. Global sensitivity analysis (GSA) is a key step of this process, aiming to understand the effects of each uncertain model input (or set of inputs) on the quantity of interest related to one (or more) output variable of interest obtained from the numerical model \cite{salrat08,ioosal17}. From a practical viewpoint, GSA aims at investigating four major settings \cite{davgam21}: (i.) model exploration, \ie investigating the input-output relationship; (ii.) factor fixing, \ie identifying non-influential inputs; (iii.) factor prioritization, \ie quantifying the most important inputs using quantitative importance measures; (iv.) robustness analysis, \ie quantifying the sensitivity of the quantity of interest with respect to probabilistic model uncertainty of the input distributions. In the present paper, one will more focus on the first three settings, without discussing much the fourth one.

Among a large panel of GSA indices, the variance-based sensitivity measures, also called ``Sobol' indices'' \cite{Sobol1990}, are derived from the functional analysis of variance (FANOVA) decomposition \cite{efrste81} between all the independent inputs. Thus, these indices enable to provide interpretable answers to some of the previously mentioned GSA settings. Let $Y=G(X)$ denotes the input-output relationship under study, with $G(\cdot) : \mathbb{R}^d \longrightarrow \mathbb{R}$ a deterministic (often black-box) numerical model, $Y$ a scalar output and $X = (X_1,\dots,X_d)$ a vector of $d$ scalar inputs. Moreover, let $\mathcal{P}(D)$ the set of all subsets of $D=\{1,\ldots,d\}$. For every subset of input $X_A=(X_i)_{i \in A}$, $A \in \mathcal{P}(D)$, the Sobol' indices are defined as follows:
\begin{equation}
S_A = \frac{\sum_{B \subset A} (-1)^{|A|-|B|} \V{\E{G(X)|X_B}}}{\V{G(X)}}
\end{equation}
where $|\cdot|$ denotes the number of elements in a subset.
If the inputs are assumed to be independent, thanks to the FANOVA decomposition, Sobol' indices lead to a well-defined allocation of an output's variance share (\ie $S_A$) to every subset of inputs $A \in \mathcal{P}(D)$. In this case, the variance's shares $(S_A)_{A \in \mathcal{P}(D)}$ sum up to one while being nonnegative. As the indices can be interpreted as proportions of the output variance, they allow to determine which inputs of a numerical model contribute the most to the variability of the output, or, on the contrary, to identify the ones that are not influential, and possibly which inputs interact with each other. Therefore, Sobol' indices can be directly used to answer to the factor fixing and factor prioritization settings (ii. and iii.).

However, in many applications, some inputs may have a statistical dependence structure, either initially imposed in their probabilistic modeling \cite{kurcoo06} or induced by physical constraints upon the input or the output space \cite{kuckly17,marcha21}. 
In these cases, estimating and interpreting Sobol' indices is not trivial as shown by many different analyses and interpretations proposed in the past (see \cite{ioopri19} or \cite{davgam21} for an overview of this topic). In order to circumvent this issue, \cite{owe14} proposed a new approach based on the ``Shapley value'' \cite{Shapley1951}, a solution concept developed in cooperative game theory and powerfully used in economic modeling. It consists in  distributing both gains and costs to several players working in coalition in an egalitarian way, ensuring that each player gains as much (or more) as they would have from playing individually. Therefore, based on Shapley values and Sobol' indices, \cite{owe14} proposed the so-called ``Shapley effects'' as new GSA indices in the context of dependent inputs. The underlying idea is to compute, similarly to a game involving coalition of players, the value assigned to a coalition of inputs $X_A$ as the explanatory power of a part of output variance. This value corresponds to the so-called ``closed Sobol' indices'' defined as:
\begin{equation}
\Sclos_A = \frac{\V{\E{G(X)|X_A}}}{\V{G(X)}}.
\label{eq:sclos}
\end{equation}
In the GSA context, the two main properties and advantages of the Shapley effects are the following: firstly, they cannot be negative; secondly, their sum is equal to one, even in the dependent inputs' case since they allow to bypass the intricate issue of variance decomposition \cite{owepri17,ioopri19}.
Let us remark that these two properties correspond to the two main desirability criteria for importance measures of linear regression models as reviewed in \cite{gro07}. Moreover, the egalitarian principle driving the allocation rule states that, in the independent inputs' case, an interaction effect is equally apportioned to each input involved in the interaction. Finally, several works have studied the Shapley effects estimation issues. Such estimates can be obtained via several techniques such as Monte Carlo-based algorithms \cite{sonnel16}, $k$-nearest neighbors \cite{brobac20} or M\"obius inverses \cite{plirab21}.

In \cite{ioopri19}, the Shapley effects have been claimed to be used for the factor fixing setting since an effect close to zero means that the input has no significant contribution to the variance of the output (neither by its interactions nor by its possible dependencies with other inputs).
However, another phenomenon, observed by \cite{ioopri19} and known as the ``Shapley's joke'' \cite{herili22}, proves that the factor fixing setting cannot be fully achieved with Shapley effects: an exogenous variable (i.e., which is not explicitly included in the structural equations of the model) can be granted a non-negligible share of the output variance, as soon as it is sufficiently correlated with endogenous inputs.
This means that Shapley effects do not respect the so-called ``exclusion property'' defined for the importance measures of linear regression models \cite{johleb04,gro07}.
This exclusion property states that, if an input's linear regression coefficient equals zero, then its importance measure should be zero too.

In the context of statistical learning, if $G$ is a linear regression model, an analogy can be made between the Sobol' indices and the squared value of the standardized regression coefficients (denoted by SRC$^2$). Moreover, the Shapley effects correspond to the so-called ``LMG measure'' (named after the authors' names, Lindeman-Merenda-Gold, see \cite{linmer80,bud93}), which partitions the explained variance percentage $R^2$ in the same way that is done by the Shapley-based allocation rule.
A weighted analog of LMG, called \emph{proportional marginal variance decomposition} (PMVD), has been proposed by \cite{Feldman2005} in order to respect the exclusion property. It is based on the proportional value allocation rule coming from cooperative game theory.
Its usefulness in relation to LMG has been described in details in \cite{gro07,gro15} and illustrated more recently in \cite{ilidrissi_iooss_chabridon_JdS_2021,ioocha22}.
In addition to the exclusion property, a more discriminating power between the influential inputs than the one obtained with the Shapley effects is also shown. Therefore, the PMVD is a good tool (in the linear regression context) to address the factor fixing setting.

In this paper, inspired on the one hand, by the work achieved in the linear regression context leading to the PMVD, and on the other hand, by the Shapley effects, we build and propose a set of novel sensitivity indices respecting the exclusion property and not restricted to the linear model case. To do so, the \emph{proportional marginal effects} (PME) are introduced by using a new variance decomposition, based on the proportional values concept \cite{Ortmann2000,Feldman2005}, which encompasses the ability to detect exogenous variables. For the sake of clarity, Table~\ref{table:analogy_methods} provides a first preliminary analogy to emphasize which category of problem one tries to address in the present paper.


\begin{table}[ht!]\label{table:analogy_methods}
\centering
\begin{tabular}{ll}\toprule
$R^2$ decomposition (linear regression) & $\V{Y}$ decomposition (GSA) \\
\midrule
SRC$^{2}$   & Sobol' indices\\
LMG & Shapley effects \\
PMVD  & \textbf{PME (proposed indices)} \\
\bottomrule
\end{tabular}
\caption{Analogy between linear regression importance measures ($R^2$ decomposition) and variance-based GSA.}
\end{table}

The rest of this paper is organized as follows. Section \ref{sec:coopGames_GSA} focuses on the interaction between GSA and cooperative game theory and the existing literature. The Shapley effects are recalled, as well as their main shortcoming: the inability to detect exogenous inputs. To that end, the notion of $L^2$\emph{-exogeneity} is formally defined. Then, Section \ref{sec:2} defines the proportional values and presents the main result of this paper, an extension allowing for well-defined novel GSA indices: the PME. It is additionally shown that these novel indices allow to detect exogenous inputs, while remaining inherently interpretable. Section \ref{sec:3} illustrates the behavior of the novel PME by using analytical formulas obtained for analytical forms of $G$. Section \ref{sec:4} briefly recalls several strategies for the estimation of PME and provides the results obtained on several more challenging numerical test-cases. Section \ref{sec:5} discusses several possible improvements as well as some perspectives about the proposed work. A few appendices provide extra materials such as information about reproducibility of numerical results (Appendix~\ref{app:software}) and proofs (Appendix~\ref{app:proofThm}).

Throughout this paper, let $\E{\cdot}$ and $\V{\cdot}$ denote the expectation and variance respectively. A \emph{coalition of players} is a subset of the \emph{grand coalition} denoted $D=\{1, \dots, d\}$. Moreover, $\forall A \subseteq D$, the restricted set of indices  $A\setminus \{ i \}$, for any $i \in A$, is denoted by $A_{-i}$. Additionally, for any $A \subseteq D$, $X_{D \setminus A}$ is denoted by $X_{\overline{A}}$. The distribution of the random inputs $X$ is generically denoted by $P_X$ and the marginal distribution of any subset of inputs $X_A$ for any $A \subseteq D$ is generically denoted by $P_{X_A}$. The spaces $L^2(P_{X_A})$, for any $A \subseteq D$, denote the spaces of measurable functions with finite second-order moments. When a function is referred to as being nonnegative (resp. positive), it entails that it takes values in $\mathbb{R}^+$ (resp. $\mathbb{R}^{+}_*$). Whenever reference is made to a model $G$, it is always implicitly assumed that $G \in L^2(P_X)$. In this paper, almost sure statements are followed by the acronym ``a.s.''. 


\section{Cooperative game theory for variance-based global sensitivity analysis}\label{sec:coopGames_GSA}

This section aims at reviewing the usefulness of cooperative game theory in the process of designing variance-based GSA indices. A particular class of allocations is presented: the random order model allocations, which contains the Shapley values. The Sobol' cooperative games are introduced, as a formalization of the analogy between players and inputs of deterministic models. The Shapley effects are presented as the application of Shapley values to a Sobol' cooperative game. The notion of dual of a cooperative game is also presented, and an analogy is drawn between backward-forward procedures and the random order model allocations. Finally, a specific Shapley effects' drawback (for factor fixing setting) is presented as a motivation for the proposed work: their inability to detect exogenous inputs.

\subsection{Analogy between allocation and variance-based GSA indices}\label{sec:analogyCoopGames_GSA}

A cooperative game is a tuple $(D,v)$ where $D = \{1,\dots,d\}$ is a set of $d$ players and  $v : \mathcal{P}(D) \rightarrow \mathbb{R}$ is the \emph{value function}, \ie an application that maps a value to every possible coalition of players. Usually, $v$ is assumed to be \emph{monotonically increasing}, meaning that, for any two sets $T$ and $A$ such that $T \subseteq A \in \mathcal{P}(D)$, one has $v(T) \leq v(A)$. In other words, the value of a coalition $A$ cannot be lower than the value of a sub-coalition $T \subseteq A$. In the following, cooperative games with monotonically increasing value functions are referred to as ``monotonic cooperative games''. Moreover, if the value function $v$ takes values in $\mathbb{R}^+_*$ (resp. in $\mathbb{R}^+$), the corresponding cooperative game is referred to as ``positive (resp. nonnegative) cooperative game``.

The analogy between the players $D$ of a cooperative game $(D,v)$ and the inputs $(X_i)_{i \in D}$ involved in a numerical model has been first used in \cite{owe14}. The author proposed to use, as a value function, the closed Sobol' indices recalled in \myeqref{eq:sclos}, allowing to define the Sobol' cooperative games.
\begin{dfi}[Sobol' cooperative game]\label{def:sobolGame}
Let $X=(X_1, \dots, X_d)^\top$ be random inputs, let $G \in L^2(P_X)$ be a model and denote $Y=G(X)$ the random output. A Sobol' cooperative game is the cooperative game with value function $\Sclos$ defined as follows:
\begin{align*}
    \Sclos\colon  \mathcal{P}(D) &\rightarrow \mathbb{R}^+ \\
     A &\mapsto \Sclos_A = \frac{\V{\E{Y \mid X_A}}}{\V{Y}}.
\end{align*}
The Sobol' cooperative game thus refers to the nonnegative, monotonic cooperative game $(D, \Sclos)$.
\end{dfi}
By analogy with the cooperative game theory paradigm, the choice of $\Sclos$ as a value function entails measuring the value of every subset of players $A \subseteq D$ as the variance of the best approximation of $Y$ on $L^2(P_{X_A})$, \ie $\V{\E{Y \mid X_A}}$.

One of the key aspects of cooperative games is the notion of \emph{allocation}. In general, allocations can be understood as a decomposition of the quantity $v(D)$ in $d$ elements, each one being allocated to a specific player. When it comes to Sobol' cooperative games, it translates to assigning a share of the output's variance $\V{Y}$ to each input in the model, with limited assumptions on the probabilistic structure between the inputs (in particular, no independence is assumed between the inputs). Formally, an allocation can be understood as a mapping $\phi$ that associates, to a cooperative game $(D, v)$, a real-valued vector $(\phi_1, \dots, \phi_d)^\top \in \mathbb{R}^d$.

The \emph{Shapley values}, are a particular example of allocations. For any cooperative game $(D, v)$, it is uniquely characterized as the allocation $\phi\bigl((D,v)\bigr)$ verifying a set of four distinct axioms:
\begin{enumerate}
    \item \textbf{Efficiency}: $\sum_{i=1}^d \phi_i = v(D)$; 
        
    \item \textbf{Symmetry}: $\forall i,j \in D$ with $i \neq j$, if $v(A\cup\{ i\}) = v(A\cup \{ j\})$ for all $A \in \mathcal{P}(D)$, then $\phi_i = \phi_j$;
        
    \item \textbf{Null player}:  $\forall i \in D$, if $v(A\cup\{ i\}) = v(A)$ for all $A \in \mathcal{P}(D)$, then $\phi_i=0$;
        
    \item \textbf{Additivity}: If two cooperative games $(D,v)$ and $(D,v')$ have Shapley values $\phi$ and $\phi'$ respectively, then the cooperative game $(D, v + v')$ has Shapley values $\phi_j + \phi_j'$ for $j \in D$.
\end{enumerate}
For any cooperative game $(D,v)$, its Shapley values can be expressed analytically, for any $i \in D$, as:
\begin{equation}
    \mbox{Shap}_i\bigl((D,v)\bigr) = \frac{1}{d}\sum_{A \subseteq D_{-i}} {d-1 \choose |A|}^{-1}\left[v(A \cup\{i\})-v(A)\right].
    \label{eq:ShapValues}
\end{equation}
This original formulation attributed to \cite{Shapley1951} can be interpreted as a weighted average, over every possible coalition $A$, of the contribution of a player $i$ to that coalition $A$. This contribution is quantified by the quantity $v(A \cup\{i\})-v(A)$, often called ``\emph{marginal contribution}'' of the player $i$ to the coalition $A$ in the literature. The weighting scheme can be understood as the proportion of permutations (or orderings) of $D$ such that $i$ appears after the players in $A$. While this interpretation can be hard to understand, defining the Shapley values in terms of players permutations allows for a better understanding of its underlying sharing mechanism, as it is done in the following.

A particular class of allocations, known as \emph{random order models} \cite{Weber1988, Feldman2007}, allows to define allocations based on orderings of players, instead of reasoning in terms of coalitions as in \myeqref{eq:ShapValues}. Let ${\cal S}_D$ be symmetric group on $D$ (the set of all permutations of $D$). Let $\pi = (\pi_1, \dots, \pi_d) \in {\cal S}_D$ be a particular permutation, and for any $i \in D$, denote $\pi(i)=\pi^{-1}_i$ its inverse (\ie the position of $i$ in $\pi$, such that $\pi_{\pi(i)} = i$). Then, one can define the following set of players, for any $i \in \{0, \dots, d\}$:
\begin{equation}
    C_i(\pi) = \{\pi_j : j \leq i  \}.
\end{equation}
$C_i(\pi)$ is the set of the $i$-th first players in the ordering $\pi$, with the convention that, for any permutation, $C_0(\pi) = \{\emptyset\}$. As an illustration, let $D=\{1,2,3\}$, and let $\pi = (2,1,3) \in {\cal S}_D$. Then, 
$$\pi(1) = 2, \quad \pi(2) = 1, \text{ and } \quad \pi(3) = 3.$$
Moreover, 
$$C_{\pi(1)}(\pi) = C_2(\pi) = \{1,2\}, \quad C_{\pi(2)}(\pi) = C_1(\pi) = \{2\}, \quad C_{\pi(3)}(\pi) = C_3(\pi) = \{1,2,3\}$$
As their names suggest, random order models endow ${\cal S}_D$ with a probabilistic structure. For any game $(D,v)$, the set of random order models allocations (or probabilistic allocations) contains every allocation $\phi \bigl( (D,v) \bigr)$ that can be written, for any $i\in D$, as:
\begin{align*}
    \phi_i &= \sum_{\pi \in {\cal S}_D} p(\pi)\left[ v\left(C_{\pi(i)}(\pi) \right) - v\left(C_{\pi(i)-1}(\pi) \right) \right]\\
    &= \mathbb{E}_{\pi \sim p}\left[ v\left(C_{\pi(i)}(\pi) \right) - v\left(C_{\pi(i)-1}(\pi) \right) \right]
\end{align*}
where $p$ is a probability mass function over the orderings of $D$. For a player $i$, its random order allocation can be interpreted as the expectation over the permutations $\pi$ of $D$ with respect to $p$, of the marginal contributions of $i$ to the coalitions formed by $C_{\pi(i)-1}(\pi)$. The random order model allocations are always \emph{efficient} and, when dealing with monotonic games, \emph{positive} (\ie $\phi_i \geq 0$ for any $i\in D$) \cite{Weber1988}. The Shapley values, in particular, can be expressed as a random order model allocation, under the particular choice of $p$ as a discrete uniform distribution over ${\cal S}_D$, which echoes \myeqref{eq:ShapValues}:
\begin{equation}
    \mbox{Shap}_i\bigl((D,v)\bigr) =\frac{1}{d!} \sum_{\pi \in {\cal S}_D} \left[ v\left(C_{\pi(i)}(\pi) \right) - v\left(C_{\pi(i)-1}(\pi) \right) \right].
    \label{eq:ShapValuesRO}
\end{equation}
Random order models allow to apprehend allocations \emph{dynamically} (see Section~\ref{sec:dualGame}), meaning that coalitions are formed regarding orderings, as opposed to the pure coalition point of view displayed in \myeqref{eq:ShapValues}. In this setting, Shapley values can then be understood as a maximum entropy a priori (\ie uniform over ${\cal S}_D$) about this dynamic. In the light of this equivalent expression, L. S. Shapley himself interpreted the Shapley values as ``\textit{[...] an a priori assessment of the situation, based on either ignorance or disregard of the social organization of the players}'' \cite{Shapley_1953}. 

When it comes to GSA, the Shapley values of the Sobol' cooperative game $(D,\Sclos)$ associated to a numerical model $Y=G(X_1,\dots,X_d)$ allow to define the so-called \emph{Shapley effects} \cite{owe14}. For any $i\in D$, they can be written as:
\begin{subequations}
	\begin{alignat}{3}
		\mbox{Sh}_i &:= \mbox{Shap}_i\bigl((D, \Sclos) \bigr)\\
		&=\frac{1}{d}\sum_{A \subseteq D_{-i}} {d-1 \choose |A|}^{-1}\left(\Sclos_{A \cup\{i\})}-\Sclos_A \right) \\
		&= \frac{1}{d!} \sum_{\pi \in {\cal S}_D} \left[ \Sclos_{C_{\pi(i)}(\pi)} - \Sclos_{C_{\pi(i)-1}(\pi)}\right].
	\end{alignat}
\end{subequations}
These indices, which have been extensively studied in \cite{sonnel16, owepri17, ioopri19}, are a great tool to quantify variable importance in the context of dependent inputs \cite{dav21}. They allow for a meaningful decomposition of $\V{Y}$ into positive shares attributed to each input, even in situations where the inputs are correlated. 

\subsection{Dual of a cooperative game}\label{sec:dualGame}
The notion of the \emph{dual} of a cooperative game is also of interest in the present paper. On the one hand, under the game theory paradigm presented previously, the aim of the value function $v$ is to quantify the ``value produced'' by a coalition of players (\eg the monetary value). On the other hand, the dual of a cooperative game focuses on the ``worth'', or ``bargaining power'' of a coalition, i.e., the shortfall in value due to a coalition \cite{Feldman2005, Feldman2007}. The dual of a cooperative game $(D,v)$ is usually denoted by $(D,w)$ where $w$ is defined, for any $A \in \mathcal{P}(D)$ as:
\begin{equation}
w(A) = v(D) - v(D \setminus A).
\end{equation}
The quantities $w(A)$ are often referred to as the \emph{marginal contribution of a coalition $A$ to the grand coalition $D$} in the literature, and is often interpreted as a measure of how crucial a coalition is in producing $v(D)$. For the sake of conciseness, in the following, one refers to $w(A)$ as the \emph{marginal contribution of the coalition $A$}. The dual $(D,w)$ of $(D,v)$ is also a cooperative game, and thus one can seek to construct relevant allocations for this game.

Following up this idea of dual game, one can draw a parallel between random order model allocations and the well-known ``forward'' and ``backward'' variable selection procedures. \myfigref{fig:sch_backward} illustrates this similarity. Formally, one can notice that, for a player $i$ and any permutation $\pi \in \mathcal{S}_d$, one has:
\begin{equation}
    w\left(C_{\pi(i)}(\pi)\right) - w\left(C_{\pi(i)-1}(\pi)\right) = v\left(D \setminus C_{\pi(i)-1}(\pi)\right) - v\left(D \setminus C_{\pi(i)}(\pi)\right).
\end{equation}
A random order model allocation of the dual of a cooperative game can be understood as the expected (with respect to a probability mass function $p$ over $\mathcal{S}_D$) marginal contribution of a player $i$ to the players that \emph{follows} in the orderings' dynamic, whereas for the initial cooperative game, it is the expected marginal contribution of $i$ to the players that \emph{precedes} in the orderings' dynamic.

\begin{figure}[!ht]
    \centering
    \includegraphics[width=\textwidth, trim={1.25cm 1cm 1.25cm 2.5cm}]{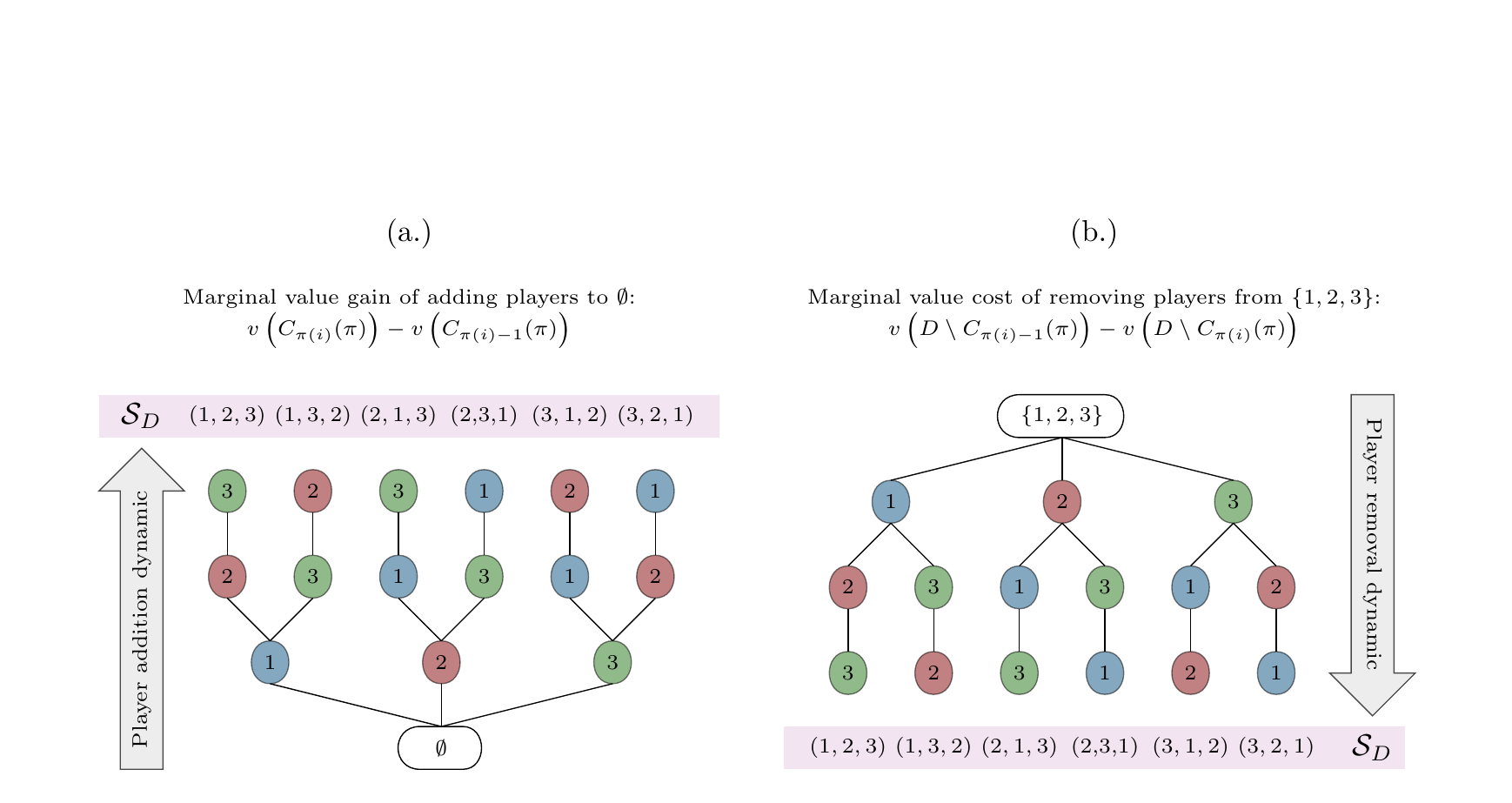}
    \caption{Analogy between random order model allocations and the forward-backward procedures for $D=\{1,2,3\}$: (a.) represents the allocation of a cooperative game as a forward procedure; (b.) illustrates the allocation of its dual as a backward procedure. The allocation of player $1$ (resp. player $2$ and $3$) is the expected marginal gain (for a cooperative game $(D,v)$) or cost (for its dual $(D,w)$) computed for the blue (resp. red and green) ordering positions, weighted according to a probabilistic distribution over $\mathcal{S}_D$.}
    \label{fig:sch_backward}
\end{figure}

The Shapley values of a cooperative game are equal to the ones of its dual (see, \cite{Funaki1996} Lemma 2.7). The dual of a Sobol' cooperative game $(D, \Sclos)$ is the nonnegative, monotonic cooperative game $(D, \St)$, where $\St$ denotes the total Sobol' indices, given for a model $Y = G(X)$, and for any $A \in \mathcal{P}(D)$, by
\begin{equation}\label{eq:Stot}
    \St_A = \frac{\E{\V{Y \mid X_{\overline{A}}}}}{\V{Y}}.
\end{equation}
The equivalence between the Shapley values of a Sobol' cooperative game and its dual has been highlighted by \cite{sonnel16}. Taking $\Sclos$ or $\St$ as a value function leads to the same Shapley effects, allowing for alternate estimation schemes. However, it is important to note that this is a particular property of the Shapley values, and it is not inherent to every random order model allocation.

\subsection{Detecting exogenous inputs}\label{sec:exoDetection}
As noted in \cite{herili22}, the main drawback (for factor fixing setting) of the Shapley effect is their behavior when dealing with exogenous (or spurious) inputs. Formally, exogenous inputs, in the context of variance-based GSA, can be defined as follows.
\begin{dfi}[$L^2$-exogeneity]\label{def:exogInput}
Let $X= (X_1, \dots, X_d)$ be random inputs of a model $G: \mathbb{R}^d \mapsto \mathbb{R}$ such that $Y=G(X)$, with $Y$ the random output. Additionally, it is assumed that any input cannot be expressed as a deterministic transformation of some (or all) of the others. Let $E \subset D$. The subset of random inputs $X_E$ are said to be ($L^2$-)exogenous to $G$ if, for any $B \subseteq E$, $\exists f \in L^2(P_{X_{\overline{B}}})$ such that:
$$Y = f(X_{\overline{B}}) \quad \text{a.s.}$$
\end{dfi}
In other words, inputs gathered in a subset $X_E$ are considered to be exogenous to a model $G$ if it is possible to characterize the random output $G(X)$ using a function depending on $X_{\overline{B}}$, for every $B \subseteq E$. This definition is rather intuitive: a set of inputs $X_E$ is considered as exogenous if one can find a function characterizing the random output which do neither involve the inputs in $E$, nor their possible interaction. It is important to notice that this definition is not too restrictive, since the exogenous inputs $X_E$ and $X_{\overline{E}}$ can still be correlated. In situation where the random inputs are correlated, the Shapley effects can allocate shares of variance to exogenous inputs. This phenomenon, called the \emph{Shapley's joke}, has been illustrated in \cite{ioopri19,herili22} through the following example.
\begin{example}[Shapley's Joke]\label{exa:ShapleyJoke}
Let $X = (X_1, X_2)^\top  \sim  \mathcal{N}\left(\begin{pmatrix}0\\0 \end{pmatrix}, \begin{pmatrix}1&\rho\\ \rho &1  \end{pmatrix} \right)$, $-1 < \rho < 1$,  and let the model be:
$$Y = G(X) = X_1.$$
The Shapley effects of the random inputs are given by
$$\mbox{Sh}_1 = 1- \frac{\rho^2}{2}, \quad \mbox{Sh}_2 = \frac{\rho^2}{2}.$$
\end{example}
Even if $X_2$ is an exogenous input, its Shapley effect is not zero as long as $\rho \not = 0$. While this behavior can be considered as valuable in a factor prioritization setting (effects due to correlation can be relevant), it can also be a drawback for spurious variable detection \cite{davgam21}. To overcome this drawback, other allocations can be considered. In particular, the \emph{proportional values} (PV) \cite{Ortmann2000} allow to detect exogenous variable while preserving the factor prioritization power and interpretability of cooperative game theory allocations.  

\section{From proportional values to proportional marginal effects}\label{sec:2}
The proportional values of positive cooperative games are a particular random order model allocation. They are introduced and extended to nonnegative games in order to be computed for Sobol' cooperative games. This extended allocation applied to the dual of Sobol' cooperative games are introduced as the \emph{proportional marginal effects} ($\PME$). It is then shown that the $\PME$ allow to detect exogenous inputs by granting them zero allocation.

\subsection{Proportional values as an alternative allocation strategy to Shapley values}\label{sec:PVbetterShap}
The PV of a cooperative game $(D,v)$ is a particular case of random order model allocation. PV arise from a particular case of chosen probability mass function \cite{Feldman2005}, and can also be characterized recursively \cite{Feldman1999, Ortmann2000}.
\begin{dfi}[Proportional values]\label{def:propVal}
Let $(D,v)$ be a positive, monotonic cooperative game, where $v:\mathcal{P}(D) \rightarrow \mathbb{R}^+_*$. The proportional values of $(D,v)$, denoted $\PV(D,v)\in \mathbb{R}^d$, are defined, for every $i \in D$, as a random order model allocation:
\begin{equation}
    \PV_i = \sum_{\pi \in {\cal S}_D} p(\pi)\left[ v\left(C_{\pi(i)}(\pi) \right) - v\left(C_{\pi(i)-1}(\pi) \right) \right]
    \label{eq:def_randord_PV}
\end{equation}
for the following particular choice of probability mass function over $\mathcal{S}_D$:
\begin{equation}
    p(\pi) = \frac{L(\pi)}{\sum_{\sigma \in {\cal S}_D} L(\sigma) }, \quad \text{where}\quad L(\pi) = \left(\prod_{j \in D} v\left( C_j(\pi) \right) \right)^{-1}.
    \label{probPVdef}
\end{equation}
Equivalently, PV can be characterized recursively, for every $i \in D$, as:
\begin{equation}
     \PV_i = \frac{R(D,v)}{R(D_{-i}, v)}
     \label{eq:def_ratiopot_PV}
 \end{equation}
where, for all $A \in {\cal P}(D)$, $\displaystyle R(A,v) = v(A) \left(\sum_{j\in A} R(A_{-j}, v)^{-1} \right)^{-1}$, and $R(\emptyset, v) = 1$. This recursive definition leads to the following identification \cite{Feldman2007}:
\begin{equation}   
\label{eq:prop_values1}
\PV_i =\cfrac{\sum_{\pi \in  \mathcal{S}_{D_{-i}}} \prod_{j=1}^{d-1} v\left(C_{j}(\pi)\right)^{-1}}{\sum_{\sigma \in  \mathcal{S}_{ D }} \prod_{j=1}^{d} v\left(C_{j}(\sigma)\right)^{-1}}.
\end{equation}
\end{dfi}
\begin{rmk}
The recursive function $R(D,v)$ defined in Definition~\ref{def:propVal} is better known as a \emph{ratio potential} in the cooperative game theory literature \cite{Feldman1999}, which is central to define certain allocations. This formulation is especially useful for estimation purposes and when it comes to proving results related to the PV. However, for the sake of conciseness, this notion is considered to be out of the scope of the present work, and is not further discussed.
\end{rmk}
They can also be characterized axiomatically (see \cite{Feldman2007}), as the unique allocation $\phi(D,v)$ respecting the following two axioms:
\begin{itemize}
    \item \textbf{Efficiency}: $\sum_{i=1}^d \phi_i = v(D)$;
    \item \textbf{Equal proportional gains}: for all $A \in {\cal P}(D)$, and for all $i,j \in A$, $i \not = j$:
    $$\frac{\phi_i\bigl( (A,v) \bigr)}{\phi_j\bigl( (A,v) \bigr)} = \frac{\phi_i\bigl( (A_{-j}, v) \bigr)}{\phi_j\bigl( (A_{-i}, v) \bigr)}.$$
\end{itemize}
Since the PV of positive monotonic games are efficient and nonnegative, they allow for the same meaningful interpretation as the Shapley values, \ie as shares of $v(D)$. The equal proportional gains axiom allows to better interpret the redistribution dynamic of this particular allocation scheme. For any two different players $i$ and $j$, the ratio of their allocations in any subgame $(A, v)$ (for every $A \in \mathcal{P}(D)$ such that $i,~j \in A$) must be invariant to removing each player's contribution to the other's allocation. In other words, the magnitude of the ratios must be preserved, independently of the possible interaction between $i$ and $j$, within any coalition they can belong to. This implicitly entails that the allocation tends to favor the players proportionally to their (marginal) contributions to every possible coalitions in the redistribution process.

As a frame of comparison, the Shapley values can also be characterized as the unique efficient allocation respecting the following axiom (see \cite{Feldman2007}):
\begin{itemize}
    \item \textbf{Balanced contributions:} for all $A \in {\cal P}(D)$, and for all $i,j \in A$, $i \not = j$:
    $$\phi_{i}(A, v)-\phi_{i}(A_{-j}, v)= \phi_{j}(A, v)-\phi_{j}(A_{-i}, v).$$
\end{itemize}
This axiom entails that for any two different players $i$ and $j$, the difference in each allocation by removing the other player to any subgame $(A,v)$ such that $i,~j \in A$ must remain equal, for any $A \in \mathcal{P}(D)$. In other words, the difference in allocation of the two players induced by the removal of the other player must be equal, implicitly entailing a balanced redistribution process, where individual and coalitional contributions are favored equally.

\begin{rmk}\label{rmk:ShapVal_vs_PV}
In a nutshell, one can remark that the redistribution processes in both allocations (Shapley values \vs PV) are fundamentally different: the PV redistribution process is \emph{proportional} meanwhile the Shapley values are \emph{egalitarian}.
\end{rmk}

The different behaviors between PV and Shapley values can be illustrated by considering a two-player game, \ie $(D = \{1,2\})$. The allocation are given, for any $i \in D$, by
\begin{subequations}
    \begin{alignat}{3}
        \PV_i\Big((D,v)\Big)&= v(\{i\})+ \frac{v(\{i\})}{v(\{1\})+v(\{2\})}\Big(v(D)-v(\{1\})-v(\{2\})\Big)\\
        \mbox{Shap}_i\Big((D,v)\Big)& = v\big(\{i\}\big) + \frac{1}{2}\Big(v(D)-v(\{1\})-v(\{2\})\Big).
    \end{alignat}
\end{subequations}
For both PV and Shapley values, each player receives its individual contribution, plus a weighted share of the value surplus generated due to their cooperation. In the literature, this surplus is referred to as the \emph{Harsanyi dividend} of the coalition $\{1,2\}$ \cite{Harsanyi1963}. The main difference between both allocations is the fact that the Shapley values redistribute exactly half of this dividend to each player (\ie egalitarian way), while the PV redistributes them proportionally (\ie proportional way) to each player's individual contribution.

It is important to notice that this allocation is only well-defined for positively defined value functions $v$. However, as stated in Definition~\ref{def:sobolGame}, the value function of Sobol' cooperative game is inherently nonnegative. The following section presents a continuous extension of the PV to nonnegative games, enabling their use for variance-based GSA purposes.

\subsection{Extension of proportional values to nonnegative games}
The main contribution of the present work is to propose an adaptation of the PV to variance-based GSA purposes. However, PV are only well defined on cooperative games $(D,v)$ with positive value function $v$, and Sobol' cooperative games are inherently endowed with nonnegative value functions. However, by leveraging the work proposed by \cite{Feldman2002}, it is possible to define a continuous extension of the PV, allowing their definition for games containing coalitions which have null value. The following result builds upon this extension, and allows to extend the PV to monotonic cooperative games with nonnegative value functions.
\begin{thm}[PV extension to nonnegative games]
\label{thm:PVExtension}
Let $(D,v)$ be a nonnegative, monotonic cooperative game with value function $v : \mathcal{P}(D) \rightarrow \mathbb{R}^+$. Let $k_{\max}$ denote the cardinal of the largest null coalition such that: $$k_{\max} := \underset{A \in \mathcal{P}(D)}{\max} \left\{\lvert A\rvert : v(A) = 0\right\}. $$
For any $i \in D$, let $\mathcal{K}_{-i}$ denote the set of null coalitions of $D_{-i}$, having a cardinal equal to $k_{\max}$:
$$\mathcal{K}_{-i} := \left\{A \in \mathcal{P}(D_{-i}) : v(A) = 0~\text{and}~ |A| = k_{\max} \right\}$$
and let $\mathcal{K} = \mathcal{K}_{-\emptyset}$ be the set of null coalitions of $D$ having a cardinal equal to $k_{\max}$. Moreover, for any $A \in \mathcal{P}(D)$, and for any $B \subseteq \overline{A}$, let $v_A : \mathcal{P}\left(\overline{A}\right) \rightarrow \mathbb{R}^+$ be the function defined as:
$$v_A(B) = v(A \cup B).$$
The allocation $\PVext \bigl( (D,v) \bigr) = (\PVext_1, \dots, \PVext_d)^\top \in \mathbb{R^d}$, defined for any $i \in D$ as:
\begin{equation}
    \PVext_i = \begin{cases}
    0 & \text{if } \forall A \in \mathcal{K},~ i \in A,  \\
    \dfrac{\sum_{A \in \mathcal{K}_{-i}} R(D_{-i} \setminus A, v_A)^{-1}}{\sum_{A \in \mathcal{K}} R(D \setminus A, v_A)^{-1}} & \text{otherwise},
    \end{cases}
    \label{eq:PV_cont}
\end{equation}
is a continuous extension of the PV on the set of nonnegative monotonic cooperative games, \ie for any positive cooperative game $(D,v)$, one has that:
$$\PVext \bigl( (D,v)\bigr) = \PV \bigl( (D,v) \bigr). $$
\end{thm}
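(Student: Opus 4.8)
The plan is to realise $\PVext$ as the genuine continuous limit of the ordinary proportional values along a positive perturbation of $v$, and to evaluate that limit in closed form using the random-order expression \myeqref{eq:prop_values1}. Concretely, for $\epsilon>0$ I would set $v_\epsilon = v + \epsilon$ (adding $\epsilon$ to every coalition value), which is a \emph{positive}, monotonic game, so $\PV(D,v_\epsilon)$ is well-defined, and I would show that $\lim_{\epsilon\to 0}\PV_i(D,v_\epsilon)=\PVext_i$. This simultaneously proves that $\PVext$ is a continuous extension and, as the special case with no null coalition, that it coincides with $\PV$ on positive games.

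The first ingredient is a bridge between the explicit formula and the ratio potential $R$: I would prove, by induction on $|E|$, the identity
\[
\sum_{\sigma \in \mathcal{S}_E} \prod_{j=1}^{|E|} u(C_j(\sigma))^{-1} = R(E, u)^{-1}
\]
for any positive game $(E,u)$, by conditioning on the last element placed by $\sigma$ and invoking the recursion $R(E,u)=u(E)\bigl(\sum_{j\in E} R(E_{-j},u)^{-1}\bigr)^{-1}$. Denote the left-hand side $Q(E,u)$. This turns every tail sum appearing below into an $R$-factor.

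Next comes the asymptotic analysis, which is the heart of the argument. Because $v$ is monotonic, its null coalitions form a down-set, so along any ordering $\sigma$ the null coalitions among $C_1(\sigma)\subset\cdots$ form an initial segment; hence each ordering meets at most $k_{\max}$ null coalitions, and exactly $k_{\max}$ if and only if its first $k_{\max}$ elements form a set $A\in\mathcal{K}$. Since $v_\epsilon\equiv\epsilon$ on every null coalition and $v_\epsilon\to v>0$ on every non-null one, the product attached to such a dominant ordering behaves like $\epsilon^{-k_{\max}}$ times a finite tail; I would factor $\epsilon^{-k_{\max}}$ out of both numerator and denominator of \myeqref{eq:prop_values1}. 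Splitting the dominant orderings according to the maximal null set $A\in\mathcal{K}$ they begin with, using the change of index $C_j(\sigma)=A\cup C'_{j-k_{\max}}(\tau)$ together with $v_A(B)=v(A\cup B)$ and the identity $Q=R^{-1}$, the leading coefficient of the denominator becomes $k_{\max}!\sum_{A\in\mathcal{K}} R(D\setminus A, v_A)^{-1}$; the $k_{\max}!$ counts the orderings of $A$, which all contribute the same factor precisely because the perturbation is homogeneous. The same computation on $D_{-i}$ yields $k_{\max}!\sum_{A\in\mathcal{K}_{-i}} R(D_{-i}\setminus A, v_A)^{-1}$ for the numerator when $\mathcal{K}_{-i}\neq\emptyset$, and a term of order $\epsilon^{-(k_{\max}-1)}$ (or lower) when every $A\in\mathcal{K}$ contains $i$, so that $\mathcal{K}_{-i}=\emptyset$. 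Cancelling $k_{\max}!\,\epsilon^{-k_{\max}}$ then gives the two branches of \myeqref{eq:PV_cont}: the limit is $0$ in the first case and the stated ratio in the second.

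Finally, I would check well-definedness and the positive-game reduction. For $A\in\mathcal{K}$ and nonempty $B\subseteq D\setminus A$ one has $|A\cup B|>k_{\max}$, so $v_A(B)=v(A\cup B)>0$; hence every $R(D\setminus A, v_A)$ (and every $R(D_{-i}\setminus A, v_A)$) is finite and strictly positive, the denominator does not vanish, and the formula is well-posed. For a positive game $v(\emptyset)=0$ is the only null coalition, so $k_{\max}=0$, $\mathcal{K}=\mathcal{K}_{-i}=\{\emptyset\}$, $v_\emptyset=v$, and \myeqref{eq:PV_cont} collapses to $R(D,v)/R(D_{-i},v)=\PV_i$. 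The main obstacle is the asymptotic bookkeeping: correctly isolating the order $\epsilon^{-k_{\max}}$, showing that subdominant orderings are negligible, and reassembling the chain sums into clean $R$-factors — the homogeneity of the perturbation $v+\epsilon$ is exactly what makes the per-chain contributions collapse into the symmetric $k_{\max}!$ factor and renders the limit independent of the chosen perturbation.
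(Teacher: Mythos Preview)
Your proposal is correct and follows essentially the same route as the paper: perturb to a positive game, use the explicit permutation formula \myeqref{eq:prop_values1}, isolate the dominant $\epsilon^{-k_{\max}}$ contribution coming from orderings whose first $k_{\max}$ elements form a maximal null coalition, let the subdominant orderings vanish in the limit, and reassemble the surviving tails into ratio potentials. The only cosmetic differences are that the paper perturbs by setting $v_p(A)=\epsilon_p$ \emph{only} on null coalitions (leaving non-null values unchanged) rather than your uniform shift $v+\epsilon$, and that the paper cites the identity $\sum_{\sigma\in\mathcal{S}_E}\prod_j u(C_j(\sigma))^{-1}=R(E,u)^{-1}$ from \cite{Feldman2007} instead of reproving it; neither choice affects the argument.
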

A detailed proof of this result can be found in Appendix~\ref{app:proofThm}. Additionally to extending the PV to nonnegative games, Theorem~\ref{thm:PVExtension} allows to clearly identify players who receives a zero allocation. More precisely, a player $i$ receives a zero allocation if it is part of every largest coalitions with null value.

\begin{rmk}\label{rmk_PVext}
In the rest of this paper, any mention to the PV refers to their extended version to nonnegative monotonic games (\ie $\PVext$), as the usual chosen value functions for variance-based GSA (\eg typically, the closed Sobol' indices $\Sclos$) can indeed be equal to zero.
\end{rmk}

\subsection{Proportional marginal effects and exogeneity detection}
From Theorem~\ref{thm:PVExtension}, one can see that the PV of a cooperative game and the PV of its dual are not equal, unlike the Shapley values. In the case of the PV of Sobol' cooperative games, it is important to notice that focusing on the dual is more relevant to better detect exogenous variables (\ie variables that are irrelevant to the studied model). Indeed, taking the dual of a Sobol' cooperative game, i.e., considering $\St$ as a value function instead of $\Sclos$, allows to better detect exogenous inputs, thanks to the following classical result, echoing the work in \cite{Hart2018}. 
\begin{lme}\label{lme:StExo}
    Let $X=(X_1, \dots, X_d)^\top$ be random inputs and $G \in L^2(P_X)$ denote a model. One has, $\forall A \subset D$, 
    \begin{equation*}
        \St_A = \frac{\E{\V{G(X) \mid X_{\overline{A}}}}}{\V{G(X)}} = 0  \quad \iff \quad  G(X) = \E{G(X) \mid X_{\overline{A}}} \text{ a.s.}
    \end{equation*}
\end{lme}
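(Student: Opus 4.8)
The plan is to reduce the stated equivalence to the elementary fact that a nonnegative random variable has zero expectation if and only if it vanishes almost surely. First I would dispose of the denominator: whenever the total Sobol' index $\St_A$ is well defined one has $\V{G(X)} > 0$ (a constant model carries no variance to allocate), so $\St_A = 0$ is equivalent to $\E{\V{G(X) \mid X_{\overline{A}}}} = 0$. It therefore suffices to show that this last quantity vanishes precisely when $G(X) = \E{G(X) \mid X_{\overline{A}}}$ almost surely.

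The key step is to recast the numerator as a single $L^2$ distance. Using the definition of the conditional variance together with the tower property (both licit since $G \in L^2(P_X)$), I would establish the identity
\begin{equation*}
\E{\V{G(X) \mid X_{\overline{A}}}} = \E{\E{\left(G(X) - \E{G(X) \mid X_{\overline{A}}}\right)^2 \mid X_{\overline{A}}}} = \E{\left(G(X) - \E{G(X) \mid X_{\overline{A}}}\right)^2}.
\end{equation*}
This exhibits $\St_A$ (up to the positive normalizing factor $\V{G(X)}$) as the squared $L^2$-norm of the residual $G(X) - \E{G(X) \mid X_{\overline{A}}}$, i.e., the approximation error of projecting $G(X)$ onto $L^2(P_{X_{\overline{A}}})$. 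From here both implications follow simultaneously: the integrand is a nonnegative random variable, so its expectation is zero if and only if it is itself zero a.s., which is exactly $G(X) = \E{G(X) \mid X_{\overline{A}}}$ a.s. This delivers the forward direction and its converse at once, without having to treat the two cases separately.

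I expect no serious obstacle here, as the argument is essentially the standard characterization of a vanishing projection error. The only points deserving care are the implicit assumption $\V{G(X)} > 0$ that makes the normalization meaningful, and the handling of the ``a.s.'' qualifiers — in particular, invoking the tower property and the zero-expectation criterion on genuinely integrable, nonnegative quantities, which is guaranteed by the standing hypothesis $G \in L^2(P_X)$.
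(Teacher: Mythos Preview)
Your proposal is correct and follows essentially the same approach as the paper: both arguments reduce to the fact that a nonnegative random variable has zero expectation if and only if it vanishes almost surely, applied to the squared residual $(G(X) - \E{G(X)\mid X_{\overline{A}}})^2$. Your version is marginally more economical in that you collapse the conditioning via the tower property first and obtain both implications at once, whereas the paper treats the two directions separately, but this is a stylistic rather than a substantive difference.
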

A proof of this result can be found in Appendix~\ref{app:proofThm}. In other words, for any subset of inputs $A~\subseteq~D$, whenever $\St_A = 0$ indicates that $G(X)$ can be expressed as a function only depending on $X_{\overline{A}}$, which is strongly related to the definition of exogenous inputs (i.e., Definition~\ref{def:exogInput}). On the other hand, choosing $\Sclos$ as a value function leads to the interpretation that $\E{G(X) \mid X_A}$ is constant a.s., which is not necessarily equivalent to exogeneity. Hence, in the goal of exogenous input detection, choosing $\St$ as a value function is more relevant, especially in the case of the proportional values of Sobol' cooperative games.

This leads to the proposed cooperative game theory-inspired GSA indices called \emph{proportional marginal effects} (\PME). They are defined as follows:
\begin{dfi}[Proportional marginal effects]\label{def:pme}
Let $X=(X_1, \dots, X_d)^\top$ be random inputs, and let $Y=G(X)$ be the random output of a model. The proportional marginal effects are the proportional values of the dual of the monotonic Sobol' cooperative game related to the model $G$. They are defined as:
$$\PME = \PV \bigl( (D, \St) \bigr) \in \mathbb{R}^d.$$
\end{dfi}
The PME are efficient and nonnegative, i.e., they allocate shares of the output's variance to every inputs, and hence remain meaningful in practice even when inputs are dependent. As explained in Section~\ref{sec:PVbetterShap}, they differ from the Shapley effects on the underlying redistribution principle. More importantly, they allow to detect exogenous inputs to a model, thanks to the following result.

\begin{propo}\label{prop:EquivExoPME}
Let $X=(X_1, \dots, X_d)^\top$ be random inputs, let $G \in L^2(P_X)$ be a model, and let $E \subseteq D$. If $X_E$ is the largest $L^2$-exogenous subset inputs to $G$ among $X$, then 

$$\PME_i=0, \forall i \in E, \text{ and } \PME_j >0, \forall j \in \overline{E}.$$
\end{propo}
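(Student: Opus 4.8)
The plan is to read off both claims directly from the explicit formula \eqref{eq:PV_cont} of Theorem~\ref{thm:PVExtension} applied to the value function $v = \St$, once the combinatorial structure of the null coalitions of $\St$ has been pinned down. First I would characterize those null coalitions. By Lemma~\ref{lme:StExo}, $\St_A = 0$ is equivalent to $G(X) = \E{G(X)\mid X_{\overline A}}$ a.s., i.e. to $Y$ being (a.s.) a function in $L^2(P_{X_{\overline A}})$. Since the dual game $(D,\St)$ is monotonic, $\St_A = 0$ forces $\St_B = 0$ for every $B \subseteq A$, so by Lemma~\ref{lme:StExo} each such $B$ also yields $Y = f(X_{\overline B})$ a.s.; comparing with Definition~\ref{def:exogInput}, this shows that a coalition $A$ is null for $\St$ if and only if $X_A$ is $L^2$-exogenous. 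In particular the largest exogenous subset $E$ satisfies $\St_E = 0$ with $|E| = k_{\max}$, and $E \in \mathcal{K}$.

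The key step is to show that $E$ is the \emph{unique} largest null coalition, i.e. $\mathcal{K} = \{E\}$. This amounts to proving that the family of exogenous subsets is stable under union: if $\St_{E_1}=\St_{E_2}=0$ then $\St_{E_1\cup E_2}=0$. Writing $Y=f_1(X_{\overline{E_1}})=f_2(X_{\overline{E_2}})$ a.s., the output $Y$ is measurable with respect to both $\sigma(X_{\overline{E_1}})$ and $\sigma(X_{\overline{E_2}})$, hence with respect to their intersection; the claim follows once one identifies $\sigma(X_{\overline{E_1}})\cap\sigma(X_{\overline{E_2}})$ with $\sigma(X_{\overline{E_1}\cap\overline{E_2}}) = \sigma(X_{\overline{E_1\cup E_2}})$ up to $P_X$-null sets. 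This is exactly where the standing hypothesis of Definition~\ref{def:exogInput} — that no input is an a.s. deterministic transformation of the others — is indispensable: without it the identification fails (take $X_1=X_2$ and $Y=X_1$, where $\{1\}$ and $\{2\}$ are null but $\{1,2\}$ is not). I expect this measure-theoretic identification of the intersection of the coordinate $\sigma$-algebras to be the main obstacle; granting it, the union of all null coalitions is again null, hence the unique maximal null coalition, and it coincides with $E$, whence $\mathcal{K}=\{E\}$ and $k_{\max}=|E|$.

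With $\mathcal{K}=\{E\}$ in hand, both conclusions fall out of \eqref{eq:PV_cont}. For $i \in E$, the condition ``$\forall A \in \mathcal{K},\ i \in A$'' holds trivially, so $\PME_i = \PVext_i = 0$. For $j \in \overline E$, one has $E \subseteq D_{-j}$ and $E$ is still the unique null coalition of maximal cardinality inside $D_{-j}$, so $\mathcal{K}_{-j} = \{E\}$; the formula collapses to a single ratio of potentials,
\begin{equation*}
\PME_j = \frac{R\!\left(\overline{E}_{-j},\, v_E\right)^{-1}}{R\!\left(\overline{E},\, v_E\right)^{-1}} = \frac{R\!\left(\overline{E},\, v_E\right)}{R\!\left(\overline{E}_{-j},\, v_E\right)}, \qquad v_E(B) = \St_{E \cup B},
\end{equation*}
which is precisely $\PV_j\bigl((\overline E, v_E)\bigr)$ for the reduced game on $\overline E$, by the recursive characterization of Definition~\ref{def:propVal}.

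Finally I would establish positivity on $\overline E$. Every non-empty $B \subseteq \overline E$ gives $|E \cup B| > k_{\max}$, so $E\cup B$ cannot be null and $v_E(B) = \St_{E\cup B} > 0$; thus $(\overline E, v_E)$ is monotonic and strictly positive on every non-empty coalition, its ratio potentials $R(\overline E, v_E)$ and $R(\overline{E}_{-j}, v_E)$ are therefore strictly positive, and the proportional value $\PME_j = \PV_j\bigl((\overline E, v_E)\bigr)$ is strictly positive for every $j \in \overline E$. Together with the first computation this gives $\PME_i = 0$ for $i \in E$ and $\PME_j > 0$ for $j \in \overline E$, as claimed.
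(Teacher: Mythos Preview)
Your overall strategy --- characterize the null coalitions of $\St$ as precisely the exogenous subsets via Lemma~\ref{lme:StExo} and monotonicity, establish $\mathcal{K}=\{E\}$, and then plug into \eqref{eq:PV_cont} --- is exactly the paper's route, and your positivity argument on $\overline E$ through the reduced positive game $(\overline E, v_E)$ is in fact more explicit than what the paper writes. The one substantive divergence is in how you obtain $\mathcal{K}=\{E\}$. You attempt to \emph{derive} uniqueness of the maximal null coalition by showing that the family of exogenous sets is closed under union, which you reduce to the identification of $\sigma(X_{\overline{E_1}})\cap\sigma(X_{\overline{E_2}})$ with $\sigma(X_{\overline{E_1\cup E_2}})$ up to null sets. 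You rightly flag this as the main obstacle: this $\sigma$-algebra identity is genuinely delicate and does not follow in general from the non-degeneracy clause of Definition~\ref{def:exogInput} alone.

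The paper sidesteps this entirely by reading uniqueness directly off the hypothesis. The phrasing ``$X_E$ is \emph{the} largest $L^2$-exogenous subset'' is used to mean that no other coalition $A\neq E$ with $|A|\geq |E|$ is exogenous; by your own equivalence (null $\Leftrightarrow$ exogenous) this gives $\St_A>0$ for every such $A$, and $\mathcal{K}=\{E\}$ is immediate. Your union-closure detour is therefore unnecessary for the proposition as stated: drop it, invoke the hypothesis, and the remainder of your argument goes through unchanged.
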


A proof of this result can be found in Appendix~\ref{app:proofThm}. In addition to offer a tool for factor prioritization, the \PME\, allow to detect exogenous inputs by granting them a null allocation. As explained in Section~\ref{sec:exoDetection}, they allow to circumvent one of the main practical drawback of the Shapley effects by enabling to detect exogenous inputs, while maintaining their strengths, i.e., a meaningful decomposition of the variance of the output of a numerical model when inputs are dependent. In the following, in order to better understand these novel sensitivity indices, their behavior is studied analytically on toy-cases, and compared to the Shapley effects, in the following section.

\section{Illustration on analytical cases}\label{sec:3}

Three analytical models are studied. The first one aims at illustrating the exclusion property of the $\PME$ ensured by Proposition~\ref{prop:EquivExoPME}, which means that $\PME$ avoid the Shapley's joke as illustrated in Example~\ref{exa:ShapleyJoke}. The second test-case is dedicated to highlighting the proportional principle of the $\PME$ (see Remark~\ref{rmk:ShapVal_vs_PV}), by studying their behavior w.r.t. the magnitude of the linear coefficient of one input in a correlated setting. Finally, the third model introduces a trade-off between individual and interaction effects between two inputs, and further highlights the difference in repartition between the Shapley effects and the proposed $\PME$.



\subsection{A linear model with an exogenous input}\label{testcase_linmod_exo}

This first analytical case aims at highlighting the exclusion property of the $\PME$, as ensured by Proposition~\ref{prop:EquivExoPME} and motivated by Example~\ref{exa:ShapleyJoke}. This first model reads:
\begin{equation}
    Y =G(X)= X_1 + X_2, \quad X=\Pmatrix{X_1 \\ X_2 \\ X_3} \sim \cal{N}\left(\Pmatrix{0\\0\\0}, \Pmatrix{1 &0 &\rho \\ 0 & 1 & 0 \\ \rho & 0 & 1} \right)
\end{equation}
where $-1<\rho<1$. One can notice that $X_3$ is, on purpose, exogenous as it does not intervene explicitly in the computation of $Y$. It is, however, linearly correlated to $X_1$. In such a case, traditional GSA tools such as first- and second-order Sobol' indices fail to be interpreted as shares of variance \cite{davgam21}.
In order to circumvent this problem, one can resort to compute the Shapley effects and PME of the inputs. For this test-case, reference analytical values for both indices are given in \mytabref{table:sh_PME_analytical_exo}.

\begin{table}[ht!]\label{table:sh_PME_analytical_exo}
\centering
\begin{tabular}{l|l}\toprule
$\Sh_1 = 1/2 - \rho^2/4$ & $\PME_1 = 1/2$\\
\midrule
$\Sh_2 = 1/2$ & $\PME_2 = 1/2$\\
\midrule
$\Sh_3 = \rho^2/4$ & $\PME_3 = 0$ \\
\bottomrule
\end{tabular}
\caption{Reference analytical values for Shapley effects and PME (toy-case~\ref{testcase_linmod_exo}).}
\end{table}



From the results provided in \mytabref{table:sh_PME_analytical_exo}, one can first notice that $X_3$ can receive a non-zero Shapley effect, dependent on the value of $\rho$. In highly correlated settings, $X_3$ can be considered to be almost as important as $X_1$. This behavior can be meaningful in practice, since $X_3$ is correlated with $X_1$, which is not an exogenous input. However, this interpretation is based on the knowledge of the underlying model, which is supposed to be black-box. As is, relying only on the Shapley effects, the practitioner would not be able to determine the exogenous nature of the inputs. If the aim of the sensitivity study is focused on better understanding the relationship between the model $G(.)$ and its inputs, somewhat independently from their probabilistic structure, the Shapley effects are hence not suitable alone.

One can notice that the PME does indeed detect $X_3$ as being an exogenous input, by granting it a zero allocation. Moreover, in this setting, the PME is not influenced by the linear correlation $\rho$ between $X_1$ and $X_3$. In combination with the Shapley effects, additional insights on $G$ can be extracted from the initial study: while $X_3$ can have an effect on $G$ through its correlation with other inputs, it is exogenous to $G$. Additionally, by allocating half the output's variance to both $X_1$ and $X_2$, the PME also indicates an equal influence. Hence, by combining the interpretation of both indices, one can interpret these results as follows: $X_3$ is an exogenous variable (PME), but it bears an effect on $G$ through its dependence with other inputs (Shapley effects) and, moreover, $X_1$ and $X_2$ seem to bear an equal influence on the output's variance, whenever $X_3$ is detected as exogenous (PME).

It is important to note that neither $G$ nor the dependence structure between the inputs need to be known for this interpretation. Hence, both indices are complementary and allow for a more precise interpretation of the studied model and its interaction with the inputs and their probabilistic structure.




\subsection{Unbalanced linear model}\label{testcase_linmod_unbalanced}


Echoing Remark~\ref{rmk:ShapVal_vs_PV}, beyond the detection of exogenous inputs, the Shapley effects and the PME fundamentally differ on their redistribution process. While the Shapley effects allocate importance in an egalitarian fashion, the PME follows a proportional principle. This toy-case aims at highlighting this difference, by introducing a coefficient in a linear model with three correlated Gaussian inputs. This use-case is referred to as \emph{unbalanced} since the three linear coefficient are different. This toy-case writes:
\begin{subequations}
    \begin{alignat}{3}
        Y &= G(X) =  X_1 +\beta X_2+ X_3, \quad X=\Pmatrix{X_1 \\ X_2 \\ X_3} \sim \cal{N}\left(\Pmatrix{0\\0\\0}, \Pmatrix{1 &0 &0\\ 0 & 1 & \rho \\ 0 & \rho & 1} \right),\label{eq_model_unbalanced}\\
        \V{Y} &= 2+ \beta^2 +2\rho\beta\label{eq_varY_unbalanced}.
    \end{alignat}
\end{subequations}
The analytical shares of output variance, according to the Shapley effects and the PME are given in \mytabref{table:sh_PME_analytical_unbalanced}.
\begin{table}[ht!]\label{table:sh_PME_analytical_unbalanced}
\centering
\begin{tabular}{l|l}\toprule
$\V{Y} \times \Sh_1 = 1$ & $\V{Y} \times \PME_1 = 1$\\
\midrule
$\V{Y} \times \Sh_2 = \beta^2+\beta\rho + \frac{1}{2} \rho^2(1-\beta^2)$ &
$\V{Y} \times \PME_2 = \frac{\beta^2(1+\beta^2+2\rho\beta)}{(1+\beta^2)}$\\
\midrule
$\V{Y} \times \Sh_3 = 1+\beta\rho - \frac{1}{2} \rho^2(1-\beta^2)$ &
$\V{Y} \times \PME_3 = \frac{(1+\beta^2+2\rho\beta)}{(1+\beta^2)}$ \\
\bottomrule
\end{tabular}
\caption{Reference analytical values for Shapley effects and PME (toy-case~\ref{testcase_linmod_unbalanced}).}
\end{table}

One can notice that, by considering the \emph{balanced} case (i.e., $\beta=1$), the Shapley effects and PME are equal. However, as soon as the model is unbalanced, one can notice that both allocations behave in a completely different fashion as soon as $\rho$ approaches $1$. Using an asymptotic-analysis-based reasoning, one can obtain the following set of resulting approximation:


\begin{subequations}
    \begin{alignat}{3}
        \Sh_1 = \text{PME}_1 & \xrightarrow[\rho \to 1]{}~~~~~~~~~ \frac{1}{2 + \beta^2 + 2\beta }& \quad \xrightarrow[\beta \to \infty]{} 0 ,\\
        \Sh_2,\Sh_3 &  \xrightarrow[\rho \to 1]{}~~~~~~~~\frac{\frac{1}{2}\beta^2 + \beta + \frac{1}{2} }{2 + \beta^2 + 2\beta } & \quad \xrightarrow[\beta \to \infty]{} \frac{1}{2} ,\\
        \PME_2 & \xrightarrow[\rho \to 1]{}~~~\frac{\beta^2(1 + \beta^2 + 2\beta)  }{(2 + \beta^2 + 2\beta)(1 + \beta^2)} & \quad \xrightarrow[\beta \to \infty]{} 1 ,\\
        \PME_3 & \xrightarrow[\rho \to 1]{}~~~\frac{(1 + \beta^2 + 2\beta)}{(2 + \beta^2 + 2\beta)(1 + \beta^2)} & \quad \xrightarrow[\beta \to \infty]{} 0.
    \end{alignat}
\end{subequations}
In other words, in extreme cases of positive linear correlation between $X_2$ and $X_3$, the Shapley effects allocates half the importance to each input despite a fairly high $\beta$ value in favor of $X_2$. The PME, on the other hand, tend to favor $X_2$ by granting it the whole variance, despite the high correlation with $X_3$. This behavior highlights the ``egalitarian \vs proportional'' behavior of both types of effects: the Shapley effects tend to consider $X_2$ and $X_3$ as equally important due to their high correlation, while the PME favor $X_3$ in regards of its high linear coefficient.

While these results inform on the asymptotic behavior of both indices, their difference can also be highlighted for punctual values of $\rho$ and $\beta$. \myfigref{fig:anal_ex_2_1} illustrates the behavior of both indices \wrt $\rho$, for two different values of $\beta$ (namely, $2$ and $10$).
Whenever $\beta=2$, one can notice that $\PME_2$ increases w.r.t. $\rho$, while $\Sh_2$ decreases after $\rho \simeq -0.24$, and both indices are concave w.r.t. $\rho$. On the other hand, $\Sh_3$ is convex w.r.t. $\rho$ and becomes increasing at $\rho \simeq -0.54$, while $\PME_3$ remains concave increasing. At extreme values of $\rho$ (i.e., close to $-1$ or to $1$), one can notice that $\Sh_2$ and $\Sh_3$ are considered equally important. Furthermore, one can notice that $\PME_2>\PME_3$, whatever the magnitude of their correlation. Increasing $\beta$ to $10$ exacerbates this behavior of the Shapley effects. However, the $\PME$ behave differently: $X_1$ and $X_3$ are given a negligible part of variance, while $X_2$ is granted a seemingly constant share, \wrt $\rho$, hovering around $98\%$. 

In conclusion, in this unbalanced case, the proportional redistribution property of the $\PME$ allows for a clearer importance hierarchy, even in situation of extreme correlation. On the other hand, the Shapley effect tends to the even importance out between the correlated inputs, leading to a potentially indecisive importance hierarchy.



\begin{figure}[!ht]
    \centering
    \includegraphics[width=\textwidth, trim= 0cm 6cm 0cm 0cm]{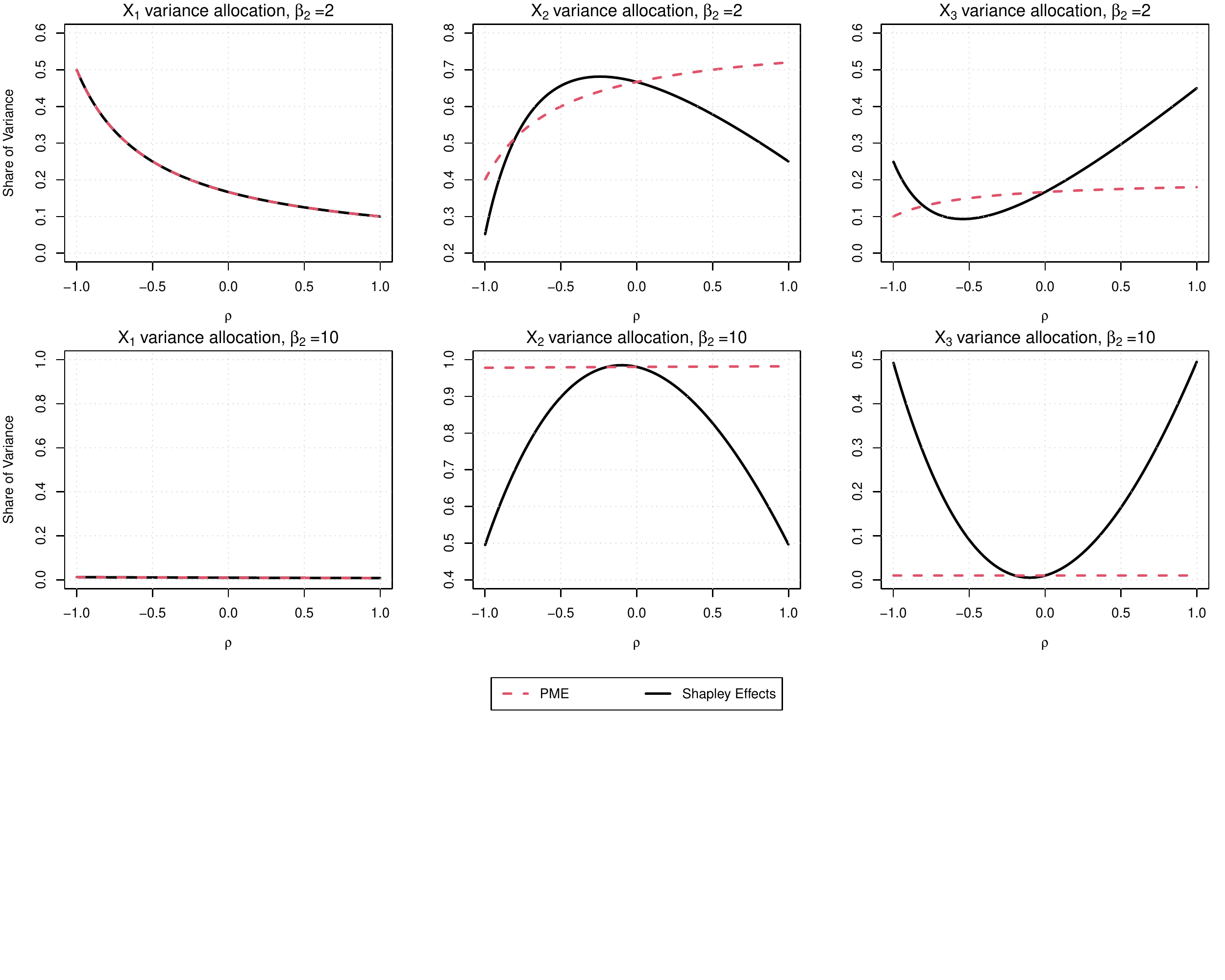}
    \caption{PME and Shapley effects \wrt $\rho$ (toy-case~\ref{testcase_linmod_unbalanced}). Top row depicts the allocations for $\beta=2$ while the bottom row is for $\beta=10$.}
    \label{fig:anal_ex_2_1}
\end{figure}




\subsection{Unbalanced linear model with interactions}\label{testcase_linmod_interactions}
This test-case aims at studying and comparing the behavior in a trade-off between individual and interaction effects. A parameter $\alpha \in [0,1]$ is introduced as part of a linear model comprising an interaction term. This particular unbalanced linear model is given as:
\begin{subequations}
    \begin{alignat}{3}
        Y &= G(X) =  X_1 + (1-\alpha)X_2 +  \alpha X_1X_2, \quad X=\Pmatrix{X_1 \\ X_2} \sim \cal{N}\left(\Pmatrix{0\\0}, \Pmatrix{1 & \rho \\ \rho & 1 } \right),\label{eq_model_interactions}\\
        \V{Y} &= 2 + (1-\alpha)^2 + 2(1-\alpha)\rho +\rho^2\label{eq_varY_interactions}.
    \end{alignat}
\end{subequations}

In other words, $\alpha$ controls the trade-off between the individual effect of $X_2$ and its interaction term with $X_1$. When $\alpha =0$, the model becomes balanced between $X_1$ and $X_2$, and when $\alpha=1$, $X_2$ only interacts with $X_1$. In both cases, $X_1$ remains present in the model. Analytical formulas for the Shapley effects and the PME are given in \mytabref{table:sh_PME_analytical_interactions}.

\begin{table}[ht!]\label{table:sh_PME_analytical_interactions}
\centering
\begin{tabular}{l|l}\toprule
$2\V{Y} \times \Sh_1 = 3 + \rho^2(1-\alpha)^2  + 2\rho(1-\alpha)$ & $\PME_1 = \frac{2}{3+(1-\alpha)^2}$\\
\midrule
$2\V{Y} \times \Sh_2 = 1+2\rho^2+(2-\rho^2)(1-\alpha)^2 +2\rho(1-\alpha)$ & $\PME_2 = \frac{(1-\alpha)^2+ 1}{ 3+(1-\alpha)^2}$\\
\bottomrule
\end{tabular}
\caption{Reference analytical values for Shapley effects and PME (toy-case~\ref{testcase_linmod_interactions}).}
\end{table}

When $\alpha = 0$ the model is balanced, and the Shapley effects and the PME are equal, allocating half of the variance to each input.
To better illustrate the redistribution principles \wrt both correlation and interaction, $(\alpha,\rho)$-plane plots are provided, for each effect in \myfigref{fig:anal_ex_3_1}.
One can notice that $\Sh_1$ and $\PME_1$ are increasing \wrt $\alpha$, but $\Sh_2$ and $\PME_2$ show a decreasing behavior. However, the Shapley effects effectively depend on the correlation coefficient $\rho$ in addition to $\alpha$, while the $\PME$ only depend on $\alpha$.



\begin{figure}[!ht]
    \centering
    \includegraphics[width=0.495\textwidth]{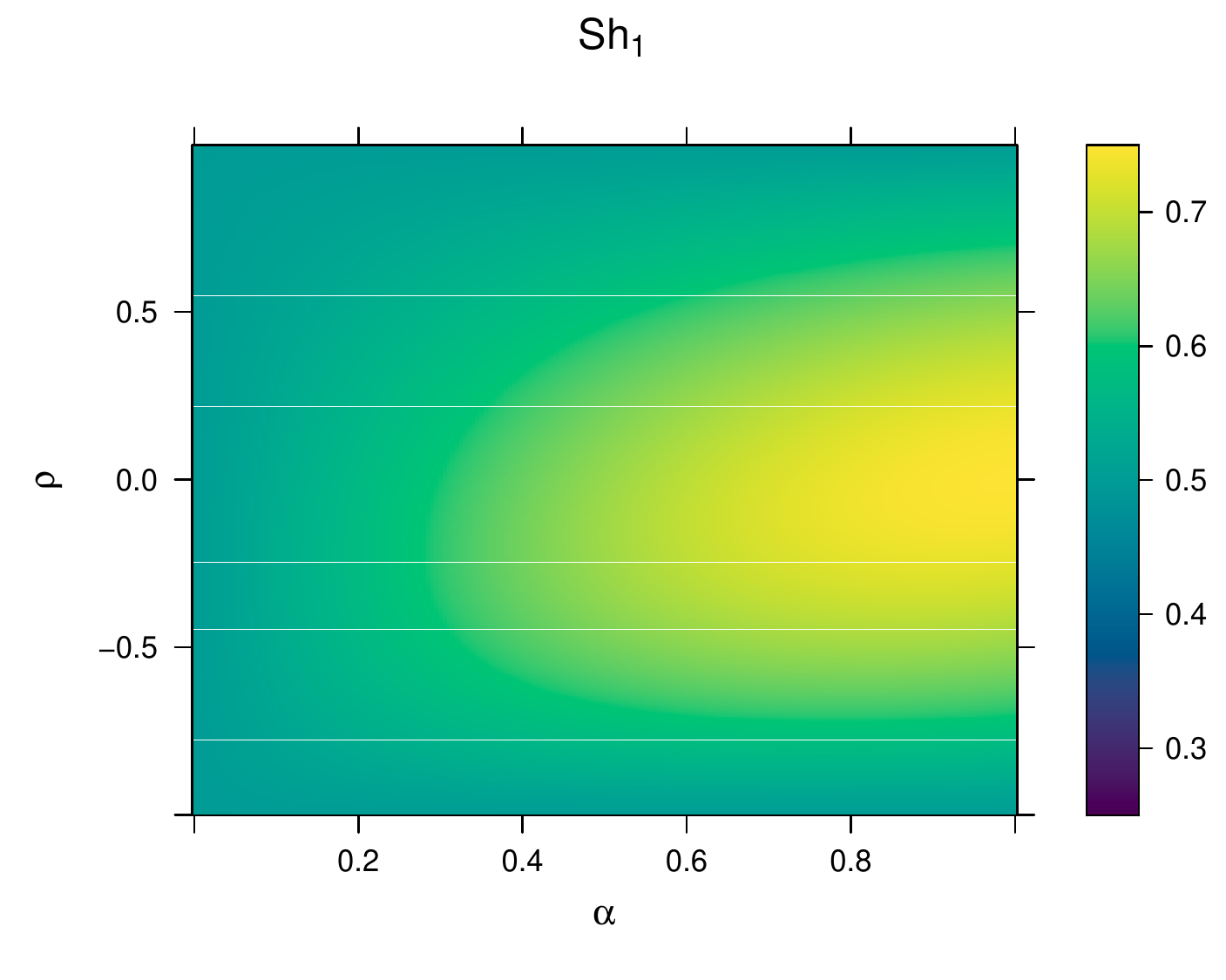} \includegraphics[width=0.495\textwidth]{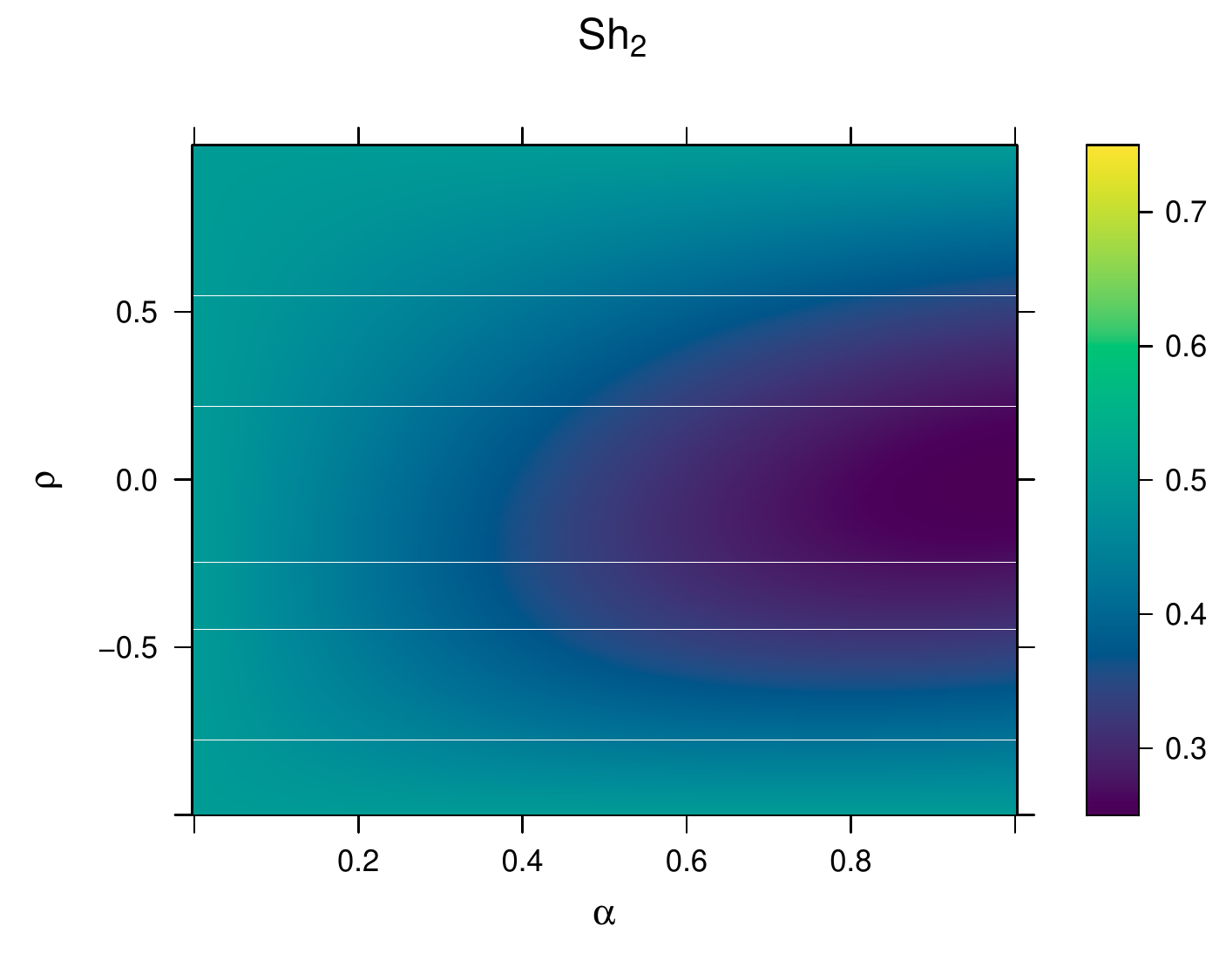}
    \includegraphics[width=0.495\textwidth, trim=0cm 2cm 0cm 0cm]{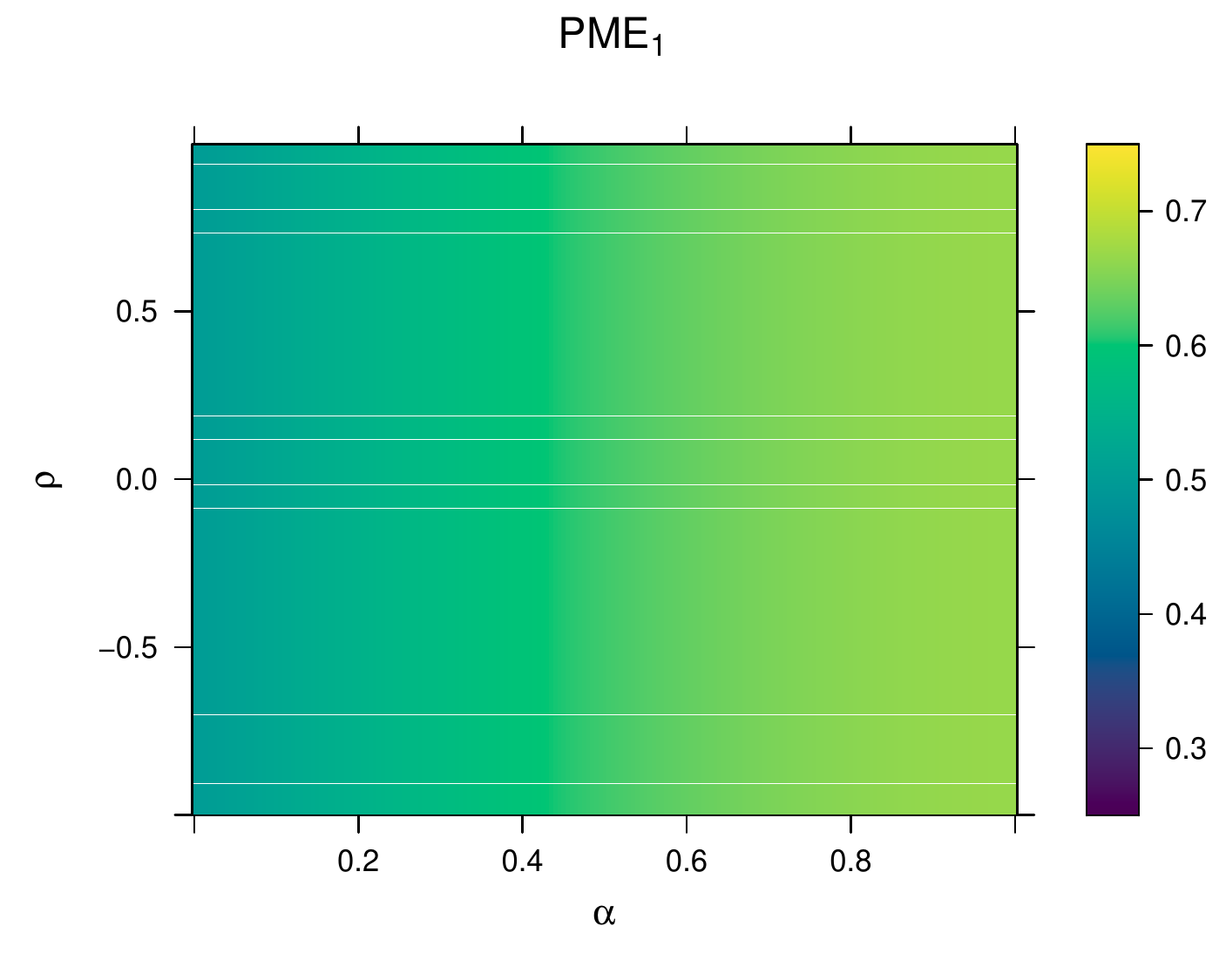} \includegraphics[width=0.495\textwidth, trim=0cm 2cm 0cm 0cm]{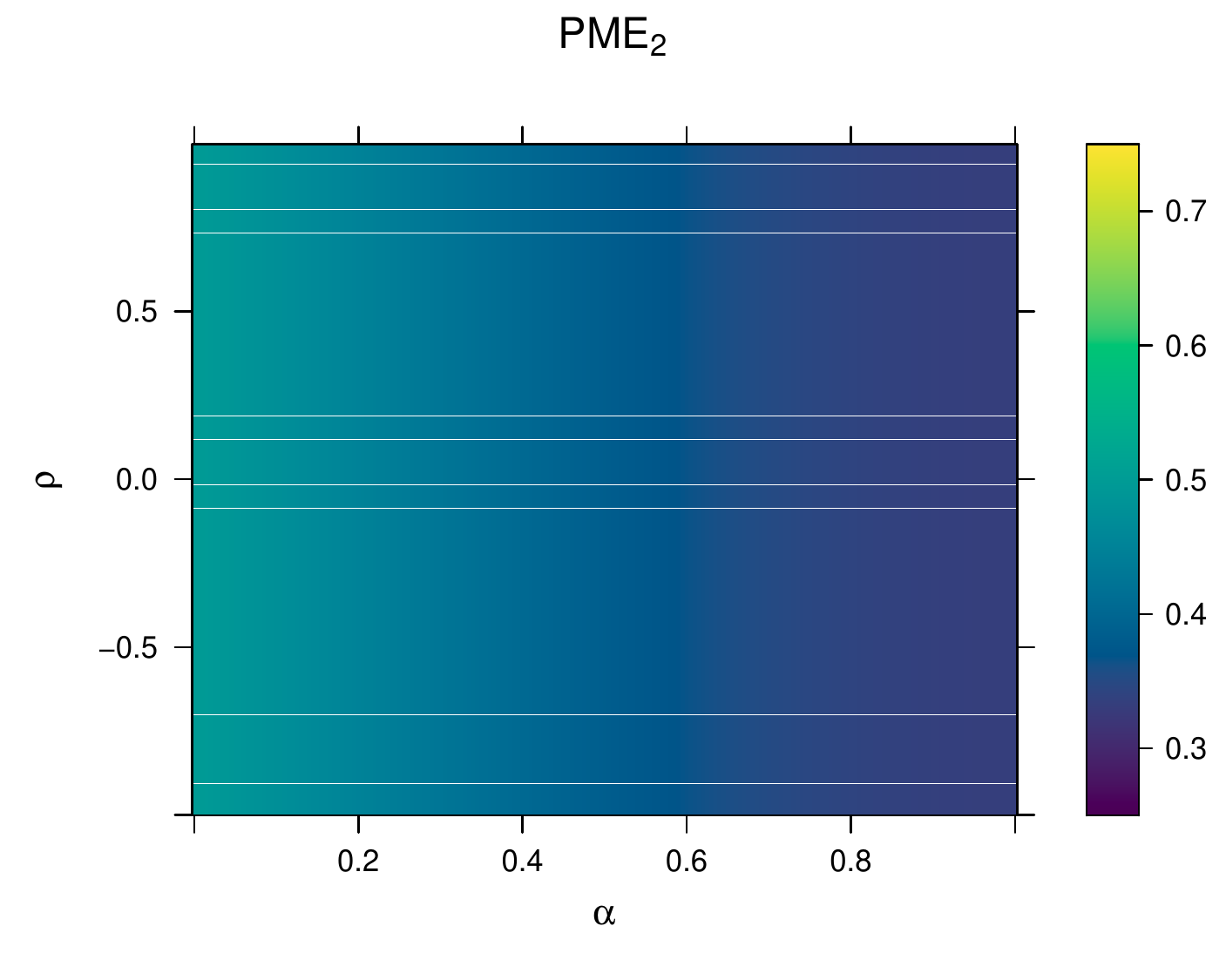}
    \caption{PME and Shapley effects in the $(\alpha, \rho)$-plane for test-case~\ref{testcase_linmod_interactions}.}
    \label{fig:anal_ex_3_1}
\end{figure}

Focusing on the behavior of both effects \wrt the interaction, one can first focus on the $\alpha$-axis of the plots in \myfigref{fig:anal_ex_3_1}. Whenever $\alpha$ is close to 0, one can notice that both indices tend to allocate an equal share of the output variance to both inputs. As $\alpha$ increases, the $\PME$ grants an increasing share of the output variance to $X_1$, from half to two thirds of the variance, and a decreasing share to $X_2$, from half to one third. As explained, the $\PME$ does not depend on the correlation between both inputs in this case.

Focusing on the Shapley effects, low values of $\alpha$ seem to share equally the variance. However, the higher $\alpha$ becomes, the more the sharing mechanism depends on $\rho$. When $\rho$ is between $-0.5$ and $0.5$, and $\alpha$ is close to $1$, $\Sh_1$ increases, with a maximum allocation of $0.75$ taken at $(\rho=0, \alpha=1)$, while $\Sh_2$ decreases, with a minimum allocation of $0.25$ at the same point. The redistribution proposed by the Shapley values is sensible in this case, since that when $\alpha=1$, $X_1$ is endowed with its individual effect, and an interaction effect with $X_2$, while $X_2$ only interacts with $X_1$ in this model. However, this interpretation is subject to a moderate level of correlation. As soon as the correlation rises, for values of $\rho$ close to $1$ and $-1$, the Shapley effects still grant half the output variance to both inputs, whatever the value of $\alpha$ is.

In this toy-case, the interpretation of the $\PME$ is more robust to high level of correlation if the goal of the study is to gather insights on the intricacies of the model $G$. On the other hand, the Shapley effects tend to even the importance between the correlated inputs whenever their correlation levels are fairly high, resulting in a potentially indecisive importance ranking.


\subsection{First conclusions}
From these few tests of linear models with a small number of correlated Gaussian inputs, and with the inclusion of simple interaction, the following conclusions can be drawn:
\begin{itemize}
    \item whenever the inputs are correlated, the Shapley effect does not detect exogenous inputs, while the $\PME$ does;
    \item in the highly correlated cases, the Shapley effect can lead to indecisive importance ranking, while the $\PME$ allows for a more pronounced hierarchy in accordance with the model $G$;
    \item overall, the $\PME$ is less sensitive to correlations, while the Shapley effect can vary greatly.
\end{itemize}
However, these conclusion are subject to the very specific presented toy-cases. The following section presents more ambitious models, closer to real-world applications, where both effects are estimated instead of computed analytically.

\section{Estimation and numerical results}\label{sec:4}
In this section, estimation schemes of the $\PME$ are presented: they rely on the same ingredients as the Shapley effects. Then, two numerical cases are studied using these schemes: a modified Ishigami function and a robot arm model. 

\subsection{Estimation strategies}\label{sec:estimation}

The plug-in estimation of the $\PME$ relies on the exact same elements than the estimation of the Shapley effects. For the sake of completeness, the classical estimation framework is briefly stated. Following the two-steps methodology presented in \cite{BrotoPhD}, initially developed for Shapley effects' estimation, one can estimate the PME in two distinct steps:
\begin{itemize}
    \item Step 1: Estimate the \emph{conditional elements}, \ie $\St_A$, $\forall A \in \cal{P}(D)$;
    \item Step 2: Perform an \emph{aggregation procedure} via a direct plug-in of the estimated conditional elements in \myeqref{eq:PV_cont}.
\end{itemize}
Only the aggregation procedure differs between the estimation of the $\PME$ and the Shapley effects. This entails that the estimation cost in terms of model evaluations is exactly the same for the $\PME$ than for the Shapley effects. Furthermore, both indices can be evaluated ``at-once'', by using the same conditional elements estimates. Two estimation schemes concerning the conditional elements are presented, followed by a break-down of the algorithmic logic behind the specific aggregation procedure of the $\PME$.

A first estimation scheme for the total Sobol' indices relies on a double Monte Carlo procedure. It can be found in \cite{sonnel16}. This estimator requires the ability to randomly sample from every possible conditional distributions of the conditional random variables $X_A|X_{\overline{A}}$, from every marginal distributions, \ie to simulate \iid observations of $X_A$, for all $A \in \cal{P}(D)$, as well as from the joint distribution of the random variable $X$. For any $A \subseteq D$, the required \iid samples to estimate $\St_A$ are the following:
\begin{itemize}
    \item an \iid sample of size $N_v$ of $X$, denoted by $(X^{(1)}, \dots ,X^{(N_v)})$;
    \item another \iid sample of size $N_o$ drawn from $X_{\overline{A}}$ and denoted by $(X_{\overline{A}}^{(1)}, \dots, X_{\overline{A}}^{(N_o)})$;
    \item for each element $X_{\overline{A}}^{(i)}, i=1, \dots, N_o$, a corresponding sample of size $N_i$ drawn from $X_{A}|X_{\overline{A}}=X_{\overline{A}}^{(i)}$ denoted by $(X_{A|\overline{A}, i}^{(1)}, \dots, X_{A|\overline{A},i}^{(N_i)})$. Additionally, for $i=1,\dots, N_o$, $j=1,\dots, N_i$, one conditional observation of the input $k \in D$ is given by
    $$\widetilde{X}_{(\overline{A}, i), k}^{(j)} = \begin{cases} 
    X_{\overline{A}}^{(i)} & \text{if } k \in \overline{A}, \\
    X_{A|\overline{A}, i}^{(j)} & \text{otherwise}.
    \end{cases} $$
    Subsequently denote, for $j=1,\dots, N_i$, a full conditional observation w.r.t. $X_{-A}^{(i)}$:
    $$\widetilde{X}_{\overline{A}, i}^{(j)} = \left( \widetilde{X}_{(\overline{A}, i), k}^{(j)}\right)_{k =1, \dots, d} \in \mathbb{R}^d.$$
    In other words, the $(N_i \times d)$ bloc-matrix composed of every full conditional observation, i.e.,
    $$\widetilde{X}_{\overline{A}, i} = \begin{pmatrix} \widetilde{X}_{\overline{A}, i}^{(1)} \\ \vdots \\ \widetilde{X}_{\overline{A}, i}^{(N_i)} \end{pmatrix}$$
    is a concatenation of columns of conditionally simulated samples (i.e., simulations from $X_{A}|X_{\overline{A}}=X_{\overline{A}}^{(i)}$) and columns composed of the values by which the distribution is conditioned on (i.e., $X_{\overline{A}}^{(i)}$).
    \end{itemize}
    
Let $A \subseteq D$. One can first estimate $\V{G(X)}$ using the unbiased classical variance estimator:
$$\widehat{\mathbb{V}(G(X))} = \frac{1}{N_v -1}\sum_{i=1}^{N_v} \left(G\left(X^{(i)}\right) - \overline{G(X)}\right)^2 $$
with $\overline{G(X)} = \frac{1}{N_v} \sum_{i=1}^n G(X^{(i)})$.
Then, for every observation $i=1,\dots, N_o$ of the sample of $X_{\overline{A}}$, the conditional variance $\V{G(X)~\mid~X_{\overline{A}}=X_{\overline{A}}^{(i)}}$ is estimated by
$$\widehat{V}_{\overline{A}}^{(i)} := \frac{1}{N_i -1}\sum_{j=1}^{N_i} \left( G\left(\widetilde{X}_{\overline{A}, i}^{(j)}\right) - \frac{1}{N_i} \sum_{k=1}^{N_i} G\left(\widetilde{X}_{\overline{A}, i}^{(k)}\right)\right)^2. $$
Finally, a consistent estimate of $\St_A$ is given by
$$\widehat{\St_A} = \left(\frac{1}{N_o} \sum_{i=1}^{N_o} \widehat{V}_{\overline{A}}^{(i)}\right) \bigg/ \widehat{\mathbb{V}(G(X))}.$$

The ability to sample from these conditional distributions can be difficult in practice, especially in cases where only an \iid sample of the input/output is available. A given-data estimation scheme for the conditional elements has then been proposed in the literature. It relies on the approximation of the conditional samples using a nearest-neighbor scheme, and thus allows to estimate every $\St_A$ with only one observed \iid sample of $X$ and its corresponding model output. For the sake of conciseness, this methodology is not detailed in the present paper, but is used in the use-case presented in Section~\ref{sec:robot}. One can refer to \cite{brobac20, ilicha21} for additional theoretical and computational details on this estimation method.

Given an estimation $\widehat{\St_A}$ for every conditional element $A \subseteq D$, the aggregation procedure leading to the PME can be computed using its recursive definition (see, Eq.~(\ref{eq:PV_cont})). It relies on the computation of the ratio potential, i.e., the function $R$ in Eq.~(\ref{eq:def_ratiopot_PV}), as described in Algorithm~\ref{alg:R_recur}.

\begin{algorithm}[!ht]
\caption{Computation of the ratio potential $R(A, v)$.}
\begin{algorithmic}[1]
\Require $A \subseteq D$, $v$.
\State $a \leftarrow |A|$
\State $P^- \leftarrow (v(\{1\}), \dots, v(\{ a\})) \in \mathbb{R}^d_+$
\For{$k=2, \dots, a-1$} 
    \State $m \leftarrow {a \choose k}$
    \State $P \in \mathbb{R}^m$
    \For{$B \subset A$ s.t. $|B|=k$} (possibly in parallel)
        \State $\mathcal{J} \leftarrow \{C \subset B \mid \forall j \in B, C=B\setminus\{j\} \}$
        \State $P_B = v(B) \left( \sum_{C \in \mathcal{J}} (P^{-}_{C})^{-1} \right)^{-1} $
    \EndFor
    \State $P^- \leftarrow P$
\EndFor
\State \Return $R(A, v) \leftarrow v(A)\left( \sum_{j=1}^m (P_j)^{-1}\right)^{-1}$
\end{algorithmic}
\label{alg:R_recur}
\end{algorithm}

With the ability to compute the ratio potential for any subset $A$ and any function $v$, one can proceed to compute the PME, as detailed in Algorithm~\ref{alg:PME_comp}.

\begin{algorithm}[!ht]
\caption{PME plug-in estimation.}
\begin{algorithmic}[1]
\Require $D$, $\widehat{\St_A}, \forall A \in \mathcal{P}_D$.
\State $d \leftarrow |D|$
\State $\text{PME} \in \mathbb{R}^d$
\State $k_{\max} = \underset{A \subseteq D}{\max} \{|A| : \widehat{\St_A}=0 \} $
\State $\mathcal{K} = \{A \subseteq D : |A|=k_{\max}, \widehat{\St_A}=0 \}$
\State $E = \{i \in D : \forall A \in \mathcal{K}, i \in A \}$
\State $\widetilde{\St_B}(.) \leftarrow \widehat{\St}_{B \cup .}$
\For{$i \in E$}
    \State $\text{PME}_i = 0$
\EndFor
\State $R_{\mathcal{K}} <- \sum_{A \in \mathcal{K}} \left(R\left(D \setminus A, \widetilde{\St}_A(.) \right) \right)^{-1}$
\For{$j \in D \setminus E$} (possibly in parallel)
    \State $\mathcal{K}_j \leftarrow \{A \subseteq D_{-j} \mid |A|=k_{\max}, v(A) = 0 \}$
    \State $\text{PME}_j \leftarrow \sum_{A \in \mathcal{K}_j} \left(R\left(D_{-j} \setminus A, \widetilde{\St_A}(.) \right)\right)^{-1} \big/ R_{\mathcal{K}}$
\EndFor
\State \Return $\text{PME}$
\end{algorithmic}
\label{alg:PME_comp}
\end{algorithm}

\subsection{Ishigami model with a correlated exogenous input}
In order to further study the behavior of the $\PME$, the Ishigami model, well-known in GSA (see, e.g., \cite{davgam21}), is first considered.
The Ishigami model is given by
$$G(X) = \sin(X_1) + 7 \sin^2(X_2) + 0.1 X_3^4 \sin(X_1).$$
In our study, the  following probabilistic structure of the inputs is considered:
$$X = \begin{pmatrix}X_1 \\ \vdots \\ X_4 \end{pmatrix} \sim \mathcal{N}\left( \begin{pmatrix} 0 \\ \vdots \\ 0\end{pmatrix}, \begin{pmatrix}
(\pi/3)^2 & 0 & 0 & \rho \\
0 & (\pi/3)^2 & 0 &0 \\
0& 0 & (\pi/3)^2&0 \\
\rho & 0 & 0 & (\pi/3)^2
\end{pmatrix} \right). $$
One can notice that $X_4$ is, by design, an exogenous input, but it is linearly correlated to $X_1$ by means of the parameter $\rho \in (-1,1)$. The Shapley effects and the $\PME$ are estimated using a Monte Carlo procedure, as presented in Section~\ref{sec:estimation}, with chosen sample sizes $N_v = 10^5$, $N_o = 2\times 10^3$ and $N_i=300$, for various values of $\rho$ (from $-0.99$ to $0.99$ with a step of $0.01$). Each Monte Carlo estimation has been independently repeated $200$ times in order to obtain confidence intervals. The results are provided in Fig. \ref{fig:ishi_rho}.

\begin{figure}[!ht]
    \centering
    \includegraphics[width=\textwidth, trim=0cm 6cm 0cm 0cm]{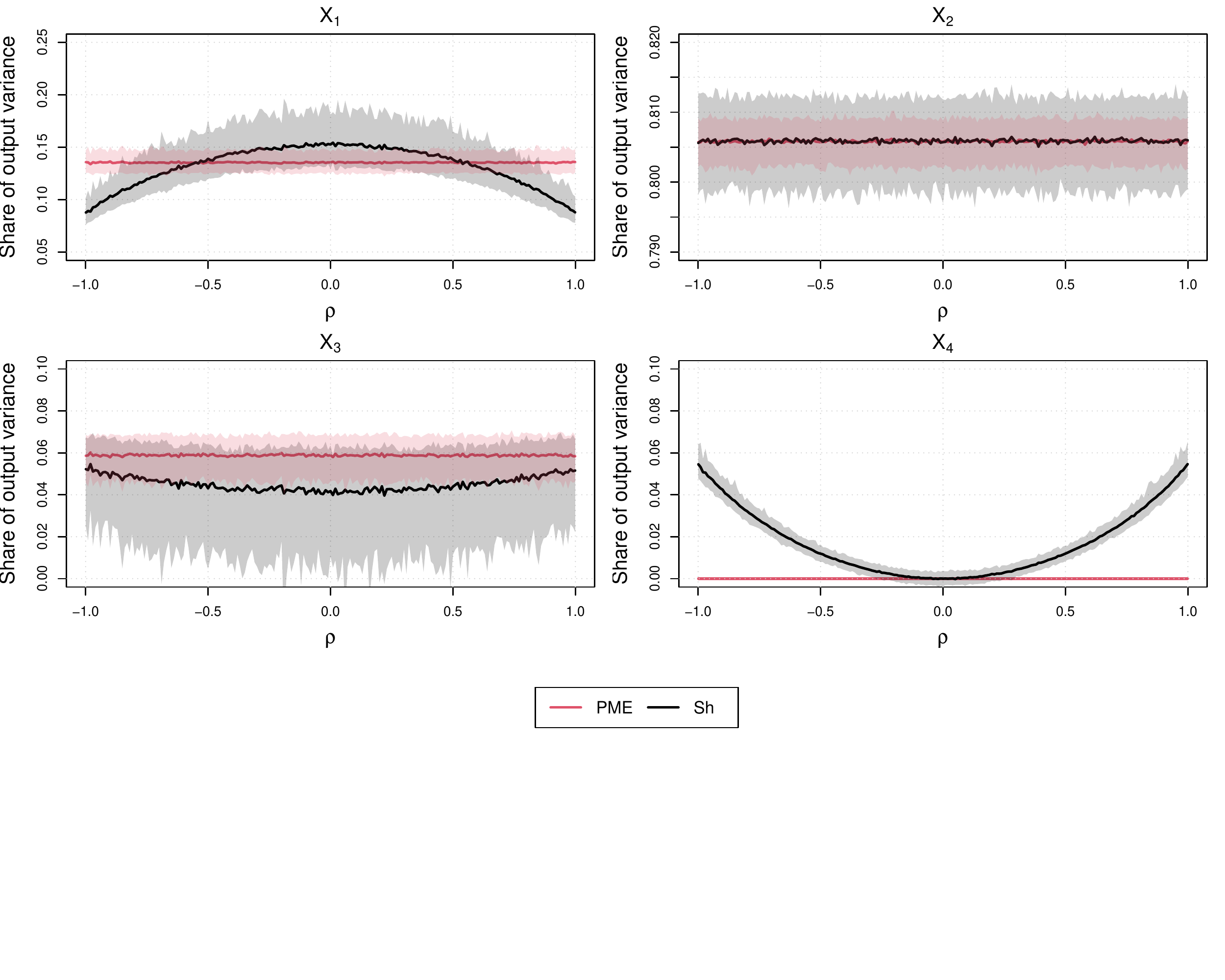}
    \caption{PME and Shapley effects for the Ishigami model with a spurious variable, with respect to the correlation coefficient $\rho$ between $X_1$ and $X_4$. The grey and red areas around the solid plots give the $95\%$-confidence intervals of the estimates.}
    \label{fig:ishi_rho}
\end{figure}

First, one can notice a strong influence of $X_2$, whose $\PME$ and Shapley effects are equal and constant along $\rho$. This result is expected, since $X_2$ has no interaction or correlation with other inputs in the Ishigami model, and hence its importance should not be subject to variation w.r.t. the correlation intensity. 
Second, focusing on $X_1$ and $X_4$, one can notice the same behavior of the Shapley effects as depicted previously. Despite the fact that $X_4$ is exogenous, in situation of extreme correlation, $\Sh_4$ can be as high as $\Sh_1$, which echoes the results in \cite{ioopri19}, but effectively grants a zero allocation to $X_4$ whenever both inputs are independent (i.e., $\rho=0$). However, their $\PME$ differ, in the sense that $\PME_1$ is constant w.r.t. $\rho$, while $\PME_4$ is equal to zero whatever the correlation value. Hence $X_4$ is effectively detected as being exogenous. 
Third, one can notice that $\Sh_3$ does vary w.r.t. $\rho$, which can be understood by the fact that $X_3$ interacts with $X_1$ in the model, which is itself correlated to $X_4$. However, since the $\PME$ detects $X_4$ as being exogenous, $\PME_3$ remains constant w.r.t the correlation structure. 
Finally, focusing on $X_3$ and $X_1$ whenever $\rho=0$ (i.e., the inputs are independent), one can notice that $\Sh_1 > \PME_1$, and $\Sh_3<\PME_3$. This can be understood as the expression of the proportional versus the egalitarian redistribution schemes. While the Shapley effects effectively grants half the interaction surplus to both inputs, the $\PME$ tend to favor $X_3$. This can be understood by the fact that $X_1$ does not have an overwhelmingly higher overall effect on $G$ than $X_3$.

Overall, the $\PME$ are less sensitive to the correlation of exogenous inputs than the Shapley effects. In conclusion, this toy-case highlights further the fact that both effects are complementary when it comes to a more precise interpretation of the model when its inputs are correlated. It reinforces the previously found behavioral tendencies in a less straightforward model.

\subsection{Robot arm model}\label{sec:robot}
In this use-case, one studies a model of the position (on the two-dimensional plane) of a robot arm with four segments \cite{anowe01}. The arm shoulder is fixed at the origin and the robot's segments have lengths $L_i$ ($i=1,\ldots,4$). Each segment is positioned at an angle $A_i$ ($i=1,\ldots,4$) with respect to the horizontal axis. While, in the original model, the inputs are assumed to be independent, statistical dependence is introduced here between the angles and between the segment lengths. The probabilistic structure on the inputs can be described as follows:
\begin{itemize}
    \item The angles $A_i$ ($i=1,\ldots,4$) follow a uniform distribution over $[0,2 \pi]$. They are pairwise correlated by the way of a Gaussian copula with correlation parameter equal to $0.95$;
    \item The lengths are sequentially built: $L_1$ follows a uniform distribution over $[0,1]$, while $L_i$ ($i=2,\ldots,4$) follows a uniform distribution over $[0,L_{i-1}]$. These inequality constraints create strong correlation between the lengths.
\end{itemize}
The model's output is the distance of the end of the robot arm to the origin, and writes:
$$Y = \left\{ \left[\sum_{i=1}^4 L_i \cos\left(\sum_{j=1}^i A_j\right)\right]^2 + \left[\sum_{i=1}^4 L_i \sin\left(\sum_{j=1}^i A_j\right)\right]^2 \right\}^{1/2}.$$
A unique \iid sample of size $2000$ of these $8$ inputs has been simulated, on which the output of the model has been computed. Figure \ref{fig:scatrobot} illustrates this data sample by the way of the pairwise scatter-plots, the marginal distributions of each input by means of histograms, and the dependence structure with estimated correlation coefficients. One can also notice first-order tendencies of the different inputs on the output $Y$ (last row).

\begin{figure}[ht!]
    \centering
    \includegraphics[width=\textwidth, trim=0cm 2cm 0cm 1.5cm]{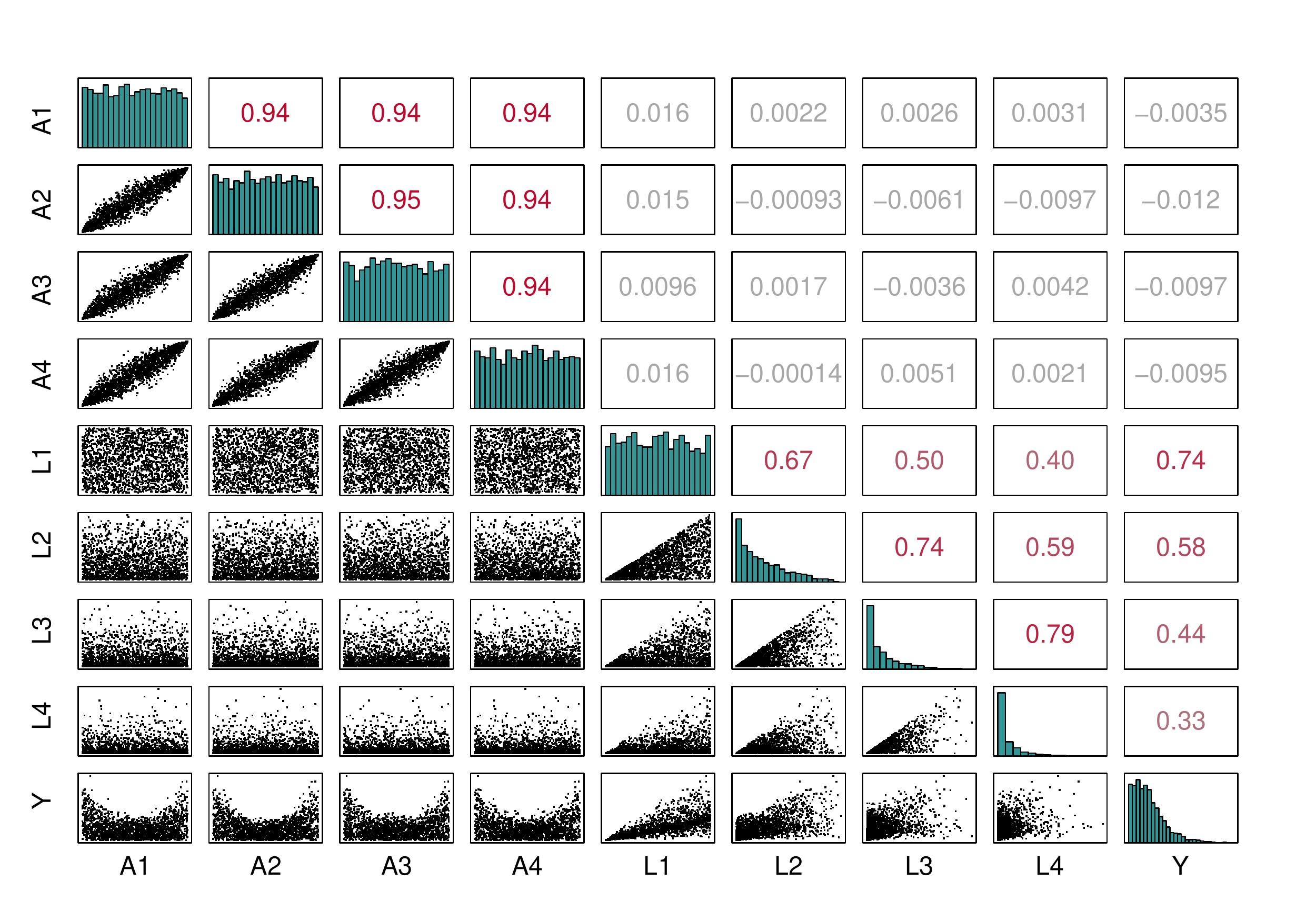}
    \caption{Pairwise plots of the $2000$-size \iid sample for the robot arm model. It gives the histograms of the variables (diagonal), as well as the scatterplots (lower part) and linear correlation coefficients (upper part) between all the variables pairs.}
    \label{fig:scatrobot}
\end{figure}

Since, in this scenario, only an \iid sample is available, the Shapley effects and $\PME$ have been computed using the nearest-neighbor procedure eluded in Section~\ref{sec:estimation} (with an arbitrarily chosen number of nearest neighbors equal to $6$). Figure \ref{fig:ShPMErobot} displays the Shapley effects and PME estimates with a $90\%$-confidence intervals computed on $100$ replications of estimated effects by random selection of $80\%$ of the dataset's observations.
\begin{figure}[!ht]
    \centering
    \includegraphics[width=\textwidth, trim=0cm 2cm 0cm 0cm]{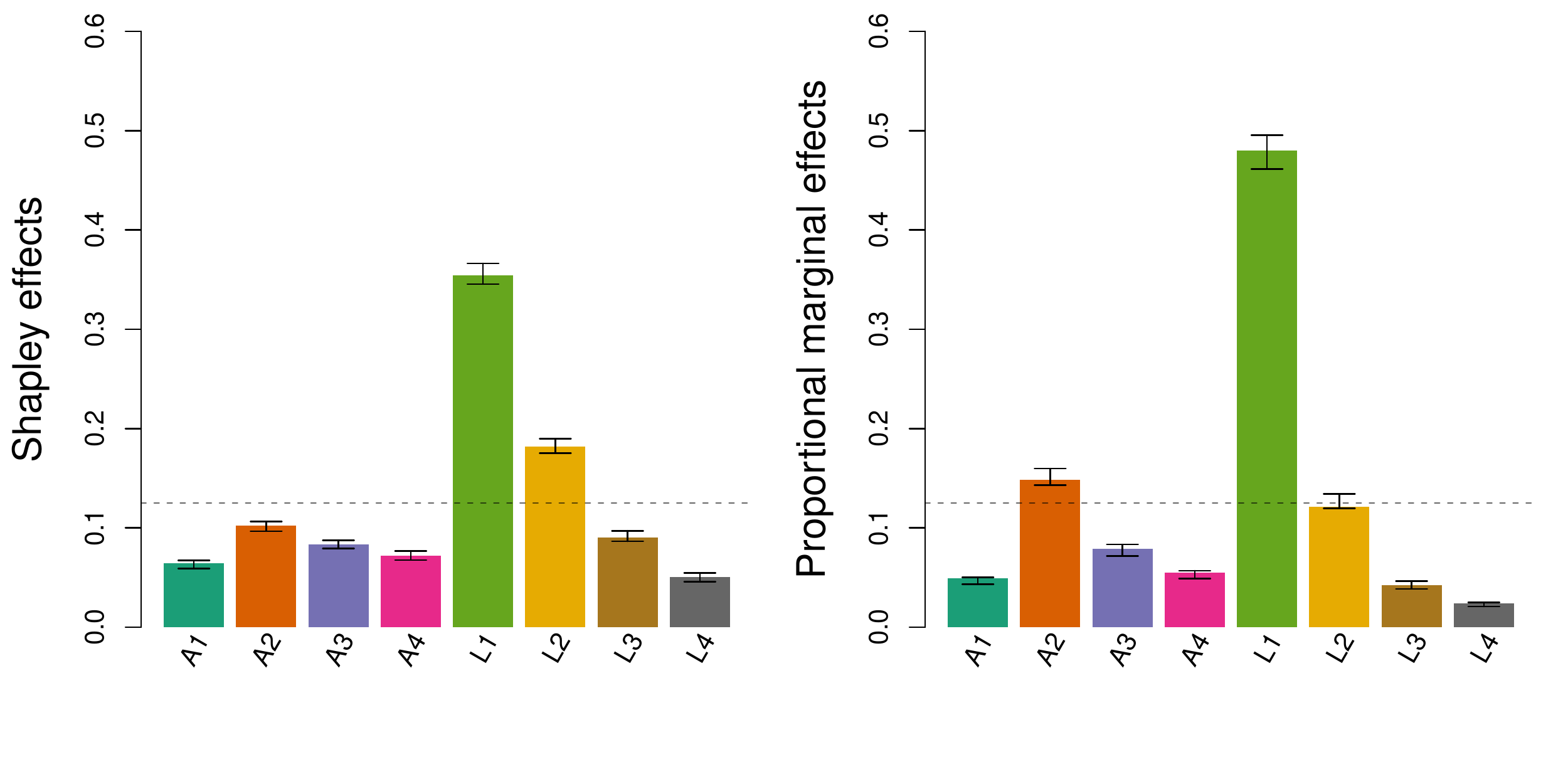}
    \caption{Shapley effects and PME estimators by nearest-neighbor procedure for the robot arm model. The vertical error bars represent the $90\%$-confidence intervals of the estimates. The horizontal grey dashed bar represents the average importance of $1/8$.}
    \label{fig:ShPMErobot}
\end{figure}
According to both effects, the most influential input is $L_1$, with Shapley effect around $35\%$ and $\PME$ around $48\%$ of the output's variance. While both effects seem to agree on the most influential input, they offer a fairly different influence hierarchy, as depicted in \mytabref{tab:robot_influRank}. This different influence hierarchy can be explained by the fairly high correlation between the inputs. For instance, $L_2$ has Shapley effects around $18\%$ while having a linear correlation coefficient with $L_1$ equal to $0.67$, whereas it has a $\PME$ of around $12.1\%$. Additionally, focusing on the angle inputs which are very linearly correlated, one can notice that their Shapley effects are relatively equal, varying between $6\%$ and $10\%$. On the other hand, their $\PME$ grants $A_2$ nearly $15\%$, with reduced influence of the other angles. Only using the Shapley effects did not consider $A_2$ as an above-average influential variable, while the $\PME$ consider it as important. This highlights the more decisive ability of the $\PME$ for influence ranking in situations of highly correlated inputs, where the Shapley effects typically grants a similar output variance share to each correlated input.

\begin{table}[ht!]
    \centering
    \begin{tabular}{ccccc}
    &\multicolumn{2}{c}{Shapley effects}  &\multicolumn{2}{c}{$\PME$}\\
    \midrule
    Influence Rank &Input & Value & Input & Value\\
    \midrule
    1&$L_1$ & $35.4\%$ & $L_1$ & $48\%$ \\
    2&$L_2$ & $18.2\%$ & $A_2$ & $14.9\%$ \\
    3&$A_2$ & $10.2\%$ & $L_2$ & $12.1\%$ \\
    4&$L_3$ & $9\%$ & $A_3$ & $8\%$ \\
    5&$A_3$ & $8.4\%$ & $A_4$ & $5.5 \%$ \\
    6&$A_4$ & $7.2\%$ & $A_1$ & $4.9 \%$ \\
    7&$A_1$ & $6.4\%$ & $L_3$ & $4.2 \%$ \\
    8&$L_4$ & $5.1\%$ & $L_4$ & $2.4 \%$ \\
    \bottomrule
    \end{tabular}
    \caption{Influence hierarchy between the inputs w.r.t. the Shapley effects and the $\PME$, on the robot arm use-case.}
    \label{tab:robot_influRank}
\end{table}
Moreover, one can notice that no input is considered as exogenous, which is reassuring since every input is effectively involved in the computation of the output. However, for both effects, $L_4$ is the least influential, with Shapley effect of around $5\%$ and $\PME$ around $2\%$.

To conclude, this use-case illustrates the more decisive discriminating power of the $\PME$ compared to the Shapley effects, in cases of highly correlated inputs. Overall, the $\PME$ favor the already most influential inputs at the expense of the inputs their are correlated with. This behavior is particularly interested in a screening setting, along with the ability of the $\PME$ to detect exogenous inputs, while maintaining a meaningful interpretation as shares of variance.

\section{Discussion and perspectives}\label{sec:5}
The main contribution of this paper is the application to GSA of the proportional values, an alternate allocation from cooperative game theory. An extension of the original allocation is proposed, in order to be used for Sobol' cooperative games, leading to novel GSA indices: the proportional marginal effects. They fundamentally differ from the Shapley effects in two ways. First, it is proved that they detect exogenous inputs, by granting them zero allocation despite their eventual correlation. Second, they allow for more decisive influence hierarchies than the Shapley effects, especially when inputs are highly correlated. They remain intrinsically interpretable, as shares of variance of the model's output.

It has been shown, by means of analytical toy-cases and use-cases, that the proposed $\PME$ allow for a more complete depiction of the intricacies of black-box models. However, this testing remains quite limited, and does not allow to derive general truth statements on their behavior. More in-depth studies (such as \cite{ioopri19} for the Shapley effects) on challenging use-cases are mandatory in order to comfort their usefulness in practice.

Two estimation schemes are proposed, based on Monte Carlo sampling scheme or given-data using a nearest-neighbor procedure, initially developed for the Shapley effects. Moreover, their computation rely on the same conditional elements' estimations than the Shapley effects: their computation does not require additional sampling or model evaluations. However, the computational burden associated to their estimation remains a drawback. They require the calculation of an exponential number ($2^d-1$) of Sobol indices. The exact same problem has been highlighted for the Shapley effects estimation.

This present work revolves around the idea of finding alternate allocation rules than the Shapley values which, while mainstream, may not be universally suitable. Introducing the framework of random order models, which allows to define allocations by simply choosing a probability mass function over the set of permutations of $D$, allows to further explore the richness of cooperative game theory. The subsequent diversity of properties inherited from the different allocations could be an interesting way to get complementary viewpoints on variance explanation. As seen in this paper, the Shapley effects and the $\PME$ are designed to give different messages, the interest of which depends on the UQ task one is dealing with. While the $\PME$ is a reasonable option for factor fixing and factor prioritization, the Shapley effects provide a tool for model exploration that allows for a good overview of all the inputs that might have an impact on the output, even though it is only due to correlation with other inputs. Other allocations, such as weighted Shapley values \cite{Kalai1987}, may be defined with different specific UQ tasks in mind, allowing for domain-specific tools for more accurate and relevant sensitivity studies.

As discussed in \cite{RAZAVI2021104954} and highlighted in \cite{ioocha22}, machine learning (ML) interpretability and sensitivity analysis bear many resemblances. The $\PME$ are in part inspired from the \emph{proportional marginal value decomposition}, introduced in \cite{Feldman2005} and further studied in \cite{gro07, gro15}, which are importance measures related to linear regression models. Hence, their formal introduction to the field of sensitivity analysis is an expression of the relevant similarities between both fields and the exciting research opportunities that can arise by taking inspiration from either research domains.

Finally, while the cooperative game theory is an evident source for novel ideas, its intricacies remain poorly understood by both the UQ and ML interpretability communities. Cooperative games in general, and the construction of allocations in particular, are inherently player-centric, while UQ and interpretability studies have historically been model-centric. Despite the evident usefulness of cooperative game theory for defining relevant and interpretable tools, further works must be put into justifying their meaningfulness to broaden their use for practical studies.

\appendix
\renewcommand\thefigure{\thesection.\arabic{figure}}



\section{Software and reproducibility of results}\label{app:software}

All the numerical tests have been performed using the \texttt{R} programming language. Every results and figures presented in this paper can be reproduced by means the openly accessible codes in a GitLab repository\footnote{\href{https://gitlab.com/milidris/PME}{https://gitlab.com/milidris/PME}}, as well as details on the packages used.

\section{Proofs}\label{app:proofThm}

\begin{proof}[Proof of Theorem~\ref{thm:PVExtension}]
Let $(D,v)$ be a nonnegative, monotonic cooperative game, let $A \subseteq D$ be a coalition, and denote $a=|A|$ the cardinal of $A$. Denote $\mathcal{S}_A$ the set of permutations of players in $A$. Let $\pi \in \mathcal{S}_A$, and for the sake of clarity, denote $|\pi| = a$, the number of elements in the permutation. 
By monotonicity, one has, $\forall j \in \{ 1, \dots, |\pi| \}$,
$$v\left( C_{j+1}(\pi)\right) \geq v\left( C_{j}(\pi)\right) \geq 0.$$
Moreover, assume that $v\left( C_0(\pi)\right) = v(\emptyset)=0$. Thus, there exists a level $k_{\pi}(v) \in \{0, 1, \dots, |\pi|\}$ such that $v\left( C_{k_{\pi}(v)}(\pi)\right) =0$ and $v\left( C_{k_{\pi}(v)+1}(\pi)\right)>0$. Formally:
\begin{equation*}
k_{\pi}(v) = \begin{cases}
\max \bigl\{ j \in \{ 1, \dots, |\pi| \} : v\left( C_{j}(\pi)\right) = 0 \bigr\} & \text{if } v\left( C_{1}(\pi)\right)=0, \\ 
0 & \text{otherwise.}\end{cases}
\end{equation*}
For the sake of conciseness and readability, the argument $v$ is omitted and the notation $k_{ \pi } := k_{\pi}(v)$ is adopted. Let $(\epsilon_p)_{p\in \mathbb{N}}$ be a sequence such that:
\begin{equation*}
\forall p \in \mathbb{N}, \epsilon_p > 0, \quad \text{and } \quad \underset{p \rightarrow \infty}{\lim} \epsilon_p = 0.
\end{equation*}
Let $\bigl( (D, v_p) \bigr)_{p \in \mathbb{N}}$ be a sequence of positive, monotonic cooperative games defined, for any $p \in \mathbb{N}$ and for any $A \subseteq D$, as:
\begin{equation*}v_p(A) = \begin{cases}
\epsilon_p & \text{if } v(A) = 0, \\
v(A) & \text{otherwise}.\end{cases}
\end{equation*}
Alternatively, one can notice that, $\forall \pi \in \mathcal{S}_A$, $\forall j \in \{1, \dots, |\pi|\}$,
\begin{equation}\label{eq:altervp}
v_p\bigl(C_j(\pi)\bigr) = \begin{cases}
\epsilon_p & \text{if } j \leq k_\pi, \\
v(C_j(\pi)) & \text{otherwise}.\end{cases}
\end{equation}
Let $p \in \mathbb{N}$, and from the recursive definition of the PV (see, Definition~\ref{def:propVal}) of the positive games $(D,v_p)$, one has, for any $i \in D$:
\begin{equation*}
\PV_i=\cfrac{\sum_{\pi \in  \mathcal{S}_{D_{-i}}} \prod_{m=1}^{d-1} v_p\left(C_{m}(\pi)\right)^{-1}}{\sum_{\sigma \in \mathcal{S}_{ D }} \prod_{m=1}^{d} v_p\left(C_{m}(\sigma)\right)^{-1}}.
\end{equation*}
For the sake of conciseness and clarity, for any $\pi \in \mathcal{S}_A$, $\forall A \subseteq D$, let us introduce the following notation:
\begin{equation*}
    \Upsilon^{l}_k(\pi,v)=\begin{cases}
    \prod_{j=k}^{l}  v\left(C_{j}(\pi)\right)^{-1} & \text{if } k \leq l, \\
    1 & \text{otherwise}.
    \end{cases}
\end{equation*}
One then has that, for any $i \in D$:
\begin{equation*}
    \PV_i = \frac{\sum_{\pi \in \mathcal{S}_{D_{-i}}} \Upsilon_1^{d-1} (\pi, v_p)}{\sum_{\sigma \in \mathcal{S}_{D}} \Upsilon_1^{d} (\sigma, v_p)} = \frac{\sum_{\pi \in \mathcal{S}_{D_{-i}}} \Upsilon_1^{k_{\pi}} (\pi, v_p) \Upsilon_{k_{\pi +1}}^{d-1} (\pi, v_p)}{\sum_{\sigma \in \mathcal{S}_{D}} \Upsilon_1^{k_{\sigma}} (\sigma, v_p) \Upsilon_{k_{\sigma+1}}^{d} (\sigma, v_p)}.
\end{equation*}
However, one can notice, from \myeqref{eq:altervp}, that, for any $\pi \in \mathcal{S}_A$, $A \subseteq D$:
\begin{equation*}
    \Upsilon_1^{k_{\pi}}(\pi, v_p) = \prod_{j=1}^{k_{\pi}} v_p\bigl(C_j(\pi)\bigr)^{-1} = \epsilon_p^{-k_{\pi}},
\end{equation*}
leading to:
\begin{equation*}
    \PV_i = \frac{\sum_{\pi \in \mathcal{S}_{D_{-i}}} \epsilon_p^{-k_{\pi}} \Upsilon_{k_{\pi +1}}^{d-1} (\pi, v_p)}{\sum_{\sigma \in \mathcal{S}_{D}} \epsilon_p^{-k_{\sigma}} \Upsilon_{k_{\sigma+1}}^{d} (\sigma, v_p)}.
\end{equation*}
Denote, for any $j \in D$, $k_{\max}^{-j}$ the size of the largest null coalition in $D_{-j}$, i.e.,
$$k_{\max}^{-j} = \underset{A \in \mathcal{P}(D_{-j})}{\text{max }} \left\{|A| : v(A) = 0 \right\},$$
with the convention that $|\emptyset|=0$, and let $k_{\max} = k_{\max}^{-\emptyset}$, the size of the largest null coalition in $D$, and notice that necessarily, 
\begin{equation}\label{eq:ineq_kmaxpi1}
    \forall j \in D , \forall \pi \in \mathcal{S}_{D_{-j}}, k_{\max}^{-j} \leq k_{\max}.
\end{equation}
Moreover, denote, for any $j \in D \cup \{ \emptyset \}$:
$$\mathcal{R}_{\max}^{-j} = \{ \pi \in \mathcal{S}_{D_{-j}} : k_{\pi} = k_{\max}^{-j} \}, \quad \text{and } \quad \mathcal{R}^{-j} = \{ \pi \in \mathcal{S}_{D_{-j}} : k_{\pi} < k_{\max}^{-j} \}$$
and notice that $\mathcal{R}_{\max}^{-j} \cup \mathcal{R}^{-j} = \mathcal{S}_{D_{-j}}$ and moreover that, 
\begin{equation}\label{eq:ineq_kmaxpi2}
    \forall j \in D \cup \emptyset, \forall \pi \in \mathcal{R}^{-j}, \quad k_{\pi} < k_{\max}.
\end{equation}
Again, denote $\mathcal{R}_{\max} = \mathcal{R}_{\max}^{-\emptyset} $ and $\mathcal{R} = \mathcal{R}^{\emptyset}$.

Hence, for any $j \in D$, one has that:
\begin{align*}
    \sum_{\pi \in \mathcal{S}_{D^{-j}}} \epsilon_p^{-k_{\pi}} \Upsilon_{k_{\pi} +1}^{d-1} (\pi, v) &= \sum_{\pi \in \mathcal{R}_{\max}^{-j}} \epsilon_p^{-k_{\pi}} \Upsilon_{k_{\pi} +1}^{d-1} (\pi, v) + \sum_{\pi \in \mathcal{R}^{-j}} \epsilon_p^{-k_{\pi}} \Upsilon_{k_{\pi} +1}^{d-1} (\pi, v) \\
    &= \sum_{\pi \in \mathcal{R}_{\max}^{-j}} \epsilon_p^{-k_{\max}^{-j}} \Upsilon_{k_{\pi} +1}^{d-1} (\pi, v) + \sum_{\pi \in \mathcal{R}^{-j}} \epsilon_p^{-k_{\pi}} \Upsilon_{k_{\pi} +1}^{d-1} (\pi, v)
\end{align*}
and in the particular case of $j = \emptyset$, one has that:
\begin{align*}
     \sum_{\pi \in \mathcal{S}_{D}} \epsilon_p^{-k_{\pi}} \Upsilon_{k_{\pi} +1}^{d} (\pi, v) &= \sum_{\pi \in \mathcal{R}_{\max}} \epsilon_p^{-k_{\max}} \Upsilon_{k_{\pi} +1}^{d} (\pi, v) + \sum_{\pi \in \mathcal{R}} \epsilon_p^{-k_{\pi}} \Upsilon_{k_{\pi} +1}^{d} (\pi, v)\\
     &=\epsilon_p^{-k_{\max}} \left( \sum_{\pi \in \mathcal{R}_{\max}}  \Upsilon_{k_{\pi} +1}^{d} (\pi, v) + \sum_{\pi \in \mathcal{R}} \epsilon_p^{k_{\max}-k_{\pi}} \Upsilon_{k_{\pi} +1}^{d} (\pi, v)\right).
\end{align*}
It entails that:
\begin{equation*}
    \PV_i = \dfrac{\sum_{\pi \in \mathcal{R}_{\max}^{-i}} \epsilon_p^{k_{\max}-k_{\max}^{-i}} \Upsilon_{k_{\pi} +1}^{d-1} (\pi, v) + \sum_{\pi \in \mathcal{R}^{-i}} \epsilon_p^{k_{\max}-k_{\pi}} \Upsilon_{k_{\pi} +1}^{d-1} (\pi, v)}{\sum_{\sigma \in \mathcal{R}_{\max}}  \Upsilon_{k_{\sigma} +1}^{d} (\sigma, v) + \sum_{\sigma \in \mathcal{R}} \epsilon_p^{k_{\max}-k_{\sigma}} \Upsilon_{k_{\sigma} +1}^{d} (\sigma, v)}.
\end{equation*}
From \myeqref{eq:ineq_kmaxpi2}, one can notice, for any $j \in D \cup \{ \emptyset \}$:
$$\underset{p \rightarrow \infty}{\text{lim}} \sum_{\pi \in \mathcal{R}^{-j}} \epsilon_p^{k_{\max}-k_{\pi}} \Upsilon_{k_{\pi} +1}^{d-|j|} (\pi, v) = 0$$
and additionally, from \myeqref{eq:ineq_kmaxpi1}, notice that for any $j \in D$:
$$\underset{p \rightarrow \infty}{\text{lim}} \sum_{\pi \in \mathcal{R}_{\max}^{-j}} \epsilon_p^{k_{\max}-k_{\max}^{-j}} \Upsilon_{k_{\pi} +1}^{d-1} (\pi, v) =\begin{cases}
\sum_{\pi \in \mathcal{R}_{\max}^{-j}} \Upsilon_{k_{\pi} +1}^{d-1} (\pi, v) & \text{if } k_{\max} = k_{\max}^{-j} \\
0 & \text{otherwise.}
\end{cases}$$
Denote:
\begin{equation*}
    \PVext \bigl((D,v) \bigr) = \underset{p \rightarrow \infty}{\text{lim }} \PV\bigl(D, v_p \bigr) ,
\end{equation*}
and notice that, for any $i \in D$:
\begin{equation*}
    \PVext_i = \begin{cases}
    \dfrac{\sum_{\pi \in \mathcal{R}_{\max}^{-i}} \Upsilon_{k_{\pi} +1}^{d-1} (\pi, v)}{\sum_{\sigma \in \mathcal{R}_{\max}} \Upsilon_{k_{\sigma} +1}^{d-1} (\sigma, v)} & \text{if } k_{\max} = k_{\max}^{-i}, \\
    0 & \text{otherwise}.
    \end{cases}
\end{equation*}
For any $j \in D$, the condition $k_{\max} = k_{\max}^{-j}$ is equivalent to the existence of a coalition $A \subseteq D_{-j}$ such that $|A| = k_{\max}$ and $v(A) = 0$. On the other hand, the complement of this condition is that $j$ must be in every coalition $A \subseteq D$ such that $|A| = k_{\max}$ and $v(A) = 0$, leading to the condition for which $\PVext_j=0$.

For any $j \in D \cup \{ \emptyset\}$, and assuming that $k_{\max}^{-j} = k_{\max}$, one can notice that $\mathcal{R}_{\max}^{-j}$ only contains the permutations $\pi \in \mathcal{S}_{D_{-j}}$ such that $v\left(C_{k_{\max}}(\pi)\right) = 0$, and by monotonicity, this implies that for any $\pi \in \mathcal{R}_{\max}^{-j}$:
$$v\left(C_1(\pi)\right) = v\left(C_2(\pi)\right) = \dots = v\left(C_{k_{\max}-1}(\pi)\right) = v\left(C_{k_{max}}(\pi)\right)=0,$$
and that for $k_{\max} < k \leq |\pi|$,
$$v\left(C_k(\pi)\right) >0.$$
For any $j \in D \cup \{\emptyset\}$, denote $\mathcal{K}_{-j} = \left\{A \subseteq D_{-j} : v(A) = 0~\text{and}~ |A| = k_{\max} \right\}$, and notice that $\mathcal{R}_{\max}^{-j}$ is necessarily composed of permutations having permutations of elements in $\mathcal{K}_{-j}$ as their first $k_{\max}$ elements. In other words, for every $\pi \in \mathcal{R}_{\max}^{-j}$,
$$C_{k_{\max}}(\pi) \in \mathcal{K}_{-j}.$$
Thus, for any $j\in D \cup \emptyset$, one has that:
\begin{align*}
    \sum_{\pi \in \mathcal{R}_{\max}^{-j}} \Upsilon_{k_{\pi} +1}^{d-1} (\pi, v) &= \sum_{A \in \mathcal{K}_{-j}}  k_{\max}! \sum_{\pi \in \mathcal{S}_{D_{-j} \setminus A}} \Upsilon_1^{|\pi|}(\pi, v_A) \\
    &=k_{\max}! \sum_{A \in \mathcal{K}_{-j}} \sum_{\pi \in \mathcal{S}_{D_{-j} \setminus A}} \prod_{k=1}^{|\pi|} v(A \cup C_k(\pi))^{-1} \\
    &= k_{\max}! \sum_{A \in \mathcal{K}_{-j}} R(D_{-j}\setminus A, v_A)^{-1}
\end{align*}
where for any $B \subseteq D \setminus A$, $v_A(B) = v(A \cup B)$, and using results from \cite{Feldman2007} on the ratio potential. This leads to the following rewriting of $\PVext$, for any $i \in D$:
\begin{equation*}
    \PVext_i = \begin{cases}
    0 & \text{if } \forall A \in \mathcal{K}, i \in A, \\
    \dfrac{ \sum_{A \in \mathcal{K}_{-i}} R(D_{-i}\setminus A, v_A)^{-1}}{ \sum_{A \in \mathcal{K}} R(D\setminus A, v_A)^{-1}} & \text{otherwise.}
    \end{cases}
\end{equation*}
Finally, notice that for any positive game $(D,v)$, \ie, where $v$ is positively valued, then necessarily, for any permutation and sub-permutations $\pi$ of players $k_{\pi} = k_{\max}=0$. Moreover, for any $j \in D \cup \{ \emptyset\}$, $\mathcal{K}_{-j} = \emptyset$, and for any $i \in D$, 
$$\PVext_i = \frac{R(D, v)}{R(D_{-i}, v)} = \PV_i,$$
and hence the allocation $\PVext\bigl( (D,v) \bigr)$ is a continuous extension of $\PV\bigl((D,v) \bigr)$ to cooperative games with nonnegative value function.
\end{proof}

\begin{proof}[Proof of Lemma~\ref{lme:StExo}]
Let $A \subseteq D$. First, focus on the implication
$$\St_A= 0 \implies  G(X) = \E{G(X) \mid X_{\overline{A}}} \text{ a.s.} $$
If $\St_A=0$, then necessarily,
$$\V{G(X) \mid X_{\overline{A}}}:=\E{\left(G(X) - \E{G(X) \mid X_{\overline{A}}}\right)^2 \mid X_{\overline{A}}}=0 \text{ a.s.}$$
which can only be attained, by non-negativity of the squared distance, if 
$$G(X) =  \E{G(X) \mid X_{\overline{A}}} \text{ a.s.}$$
which proves the implication.

Now assume that $G(X) =  \E{G(X) \mid X_{\overline{A}}} \text{ a.s.}$. Then necessarily,
$$\V{G(X) \mid X_{\overline{A}}} = 0 \text{ a.s.} $$
and thus $\St_A=0$, which proves the converse implication, and the equivalence stated in Lemma~\ref{lme:StExo}.
\end{proof}

\begin{proof}[Proof of Proposition~\ref{prop:EquivExoPME}]


Assume that $X_E$ is the largest subset of $X$ of $L^2$-exogenous inputs to $G$. This entails, by Definition~\ref{def:exogInput}, $\forall B \subseteq E, \exists f \in L^2(P_{X_{\overline{B}}})$ such that:
$$ G(X) = f(X_{\overline{B}}) \text{ a.s.}$$
Recall that for any $B\subset E$, the conditional expectation of $G(X)$ w.r.t. $X_{\overline{B}}$ is the unique projection defined as:
\begin{equation*}
    \E{G(X) \mid X_{\overline{B}}} = \underset{h \in L^2\left(P_{X_{\overline{B}}}\right)}{\text{argmin }} \E{\left(G(X) - h\left(X_{\overline{B}}\right)\right)^2},
\end{equation*}
One can notice that, since $f \in L^2(P_{X_{\overline{B}}})$ and $Y = f(X_{\overline{B}}) \text{ a.s.}$, then it necessarily minimizes the projection of $G(X)$ onto $L^2(P_{\overline{B}})$, leading to
$$ G(X) = f(X_{\overline{B}}) = \E{G(X) \mid X_{\overline{B}}} \text{ a.s.} $$
and by Lemma~\ref{lme:StExo}, it entails that, for every $B \subseteq E$, $\St_B=0$.
Furthermore, $X_E$ being the largest subset of $X$ being $L^2$-exogenous, it entails that for any $A \subseteq D$ such that $|A| \geq |E|$ and $A \neq E$, $\nexists f \in L^2\left(P_{X_{\overline{A}}}\right)$ s.t. $G(X) = f(X_{\overline{A}})$. Then, necessarily for any coalition $A \neq E$ of size larger or equal than $E$,
\begin{equation*}
    \E{G(X) \mid X_{\overline{A}}} \not = G(X) \text{ a.s.}  \iff \St_{A} > 0.
\end{equation*}
Hence, $E$ is the largest set of inputs with $\St_E=0$. Then, necessarily, $\mathcal{K} = \{ E \}$ where $\mathcal{K}$ is defined as in Theorem~\ref{thm:PVExtension}. Then $\forall i  \in E, \forall A \in \mathcal{K}, i \in A$. Hence, it entails that $\PME_i=0$ for any $i \in E$, and, furthermore, $\forall i \in \bar{E}$, $i$ is not contained in $E \in \mathcal{K}$. Finally, by Theorem~\ref{thm:PVExtension}, $\PME_i>0$ for any $i \in E$.
\end{proof}

We are grateful to Nicolas Bousquet (EDF R\&D), Fabrice Gamboa (IMT) and Christophe Labreuche (Thales) for their helpful comments.

\bibliographystyle{plain}
\bibliography{bib/references}

\end{document}